\newtheorem{theorem}{Theorem}[section]
\newtheorem{definition}[theorem]{Definition}
\newtheorem{example}[theorem]{Example}
\newtheorem{proposition}[theorem]{Proposition}
\newtheorem{remark}[theorem]{Remark}
\numberwithin{equation}{section}
\def\e{{\varepsilon}}
\def\l{{\lambda}}
\def\i{{\iota}}
\def\Lieg{{\mathfrak{g}}}
\def\Lieh{{\mathfrak{h}}}
\def\gg{{\mathfrak{g}}}
\def\hh{{\mathfrak{h}}}
\def\Hom{{\mathrm{Hom}}}
\newcommand{\Lie}{\mathcal{L}}          
\tikzset{commutative diagrams/.cd,
mysymbol/.style={start anchor=center,end anchor=center,draw=none} }
\begin{document}

\title{Stability of Lie group Homomorphisms and Lie Subgroups}
 
\author{Cristian Camilo C\'ardenas} 
\author{Ivan Struchiner}
\address{Ivan Struchiner  \hfill\break\indent 
Universidade de S\~{a}o Paulo\\
Instituto de Matem\'{a}tica e Estat\'{\i}stica, \hfill\break\indent
 Rua do Mat\~{a}o 1010, 05508-090, S\~{a}o Paulo, SP, Brazil}


\email{ivanstru@ime.usp.br}

\address{Cristian Camilo C\'ardenas\hfill\break\indent 
Universidade Federal Fluminense\\
Instituto de Matem\'{a}tica e Estat\'{\i}stica, \hfill\break\indent
Rua Prof. Marcos Waldemar de Freitas Reis, S/n, 24210-201, Niter\'oi, RJ,  Brazil}
\email{ccardenascrist@gmail.com}

\thanks{The first author was partially supported by CAPES and CNPq during the development of this project at IME-USP. The second author was partially supported by FAPESP (2015/22059-2) and CNPq (307131/2016-5).}

%
%
%

\maketitle 

\begin{abstract}
We discuss a Moser type argument to show when a deformation of a Lie group homomorphism and of a Lie subgroup is trivial. For compact groups we obtain stability results\end{abstract}

\section{Introduction}

When studying an algebraic or geometric structure, a central problem is that of understanding how one such structure is related to the nearby ones. The main objective is to describe a neighbourhood of such structure in its moduli space. A first approximation to this problem is to study the space of structures which can be obtained from the original one through a small path in the moduli space. These paths give rise to families of structures which will be called deformations. This paper deals with deformations of Lie group homomorphism and Lie subgroups. More precisely, we are interested in understanding when a smooth family of Lie group homomorphisms or a smooth family of Lie subgroups represents a constant path in the corresponding moduli spaces. When this is the case, the deformation will be called a trivial deformation. Therefore, the first problem that we will deal with in this paper is that of determining when a deformation of a Lie group homomorphism (or a Lie subgroup) is trivial.

The notion of triviality of a deformation depends on the automorphism group that one considers. For Lie group homomorphisms $\phi: H \to G$ or Lie subgroups $H \subset G$, it is usual to consider the group of inner automorphisms of $G$ as the allowed group of automorphisms. Thus, for example, a smooth family of Lie group homomorphisms $\phi_\e: H \to G$ will be called trivial if there exists a smooth curve $\e \mapsto g_\e$ starting at the identity in $G$ such that $\phi_\e(h) = g_\e\phi_0(h)g_\e^{-1}$ for all $h \in H$. An analogous definition is made for deformations of a Lie subgroup $H \subset G$. We will relate the triviality of deformations with the (smooth) vanishing of certain classes in cohomology groups associated to $H$, $G$ and $\phi$.

\begin{theorem}\label{thm:homo:trivial}
Let $\phi_{\e}$ be a deformation of $\phi:H\rightarrow G$. Then for each $\lambda$ we obtain a 1-cocycle
\[X_\lambda(h) = d_{\phi_{\lambda}(h)}R_{\phi_{\lambda}(h)^{-1}}\frac{d}{d\e}|_{\e = \lambda}\phi_\e(h)\]
in the complex which computes the differentiable cohomology of $H$ with coefficients in the pullback by $\phi_\lambda$ of the adjoint representation of $G$.

Moreover, the deformation is trivial if and only if the family of 1-cocycles $X_\lambda$ can be smoothly transgressed (i.e., the cohomology classes vanish in a smooth manner).
\end{theorem}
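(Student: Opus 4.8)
The plan is to prove the cocycle assertion first, then the two implications of the ``moreover''.

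\smallskip
\noindent\emph{The cocycle identity.} Fix $\l$ and write $\dot\phi_\l(h) = \frac{d}{d\e}|_{\e=\l}\phi_\e(h)$. Differentiating the identity $\phi_\e(h_1 h_2) = \phi_\e(h_1)\phi_\e(h_2)$ at $\e = \l$ with the Leibniz rule for curves in a Lie group gives
\[
\dot\phi_\l(h_1 h_2) \;=\; d L_{\phi_\l(h_1)}\,\dot\phi_\l(h_2)\;+\;d R_{\phi_\l(h_2)}\,\dot\phi_\l(h_1).
\]
Now I would apply $d R_{\phi_\l(h_1 h_2)^{-1}}$ to both sides, use $\phi_\l(h_1 h_2)^{-1} = \phi_\l(h_2)^{-1}\phi_\l(h_1)^{-1}$, that $L$ and $R$ translations commute, and the identity $\mathrm{Ad}_g = d L_g \circ d R_{g^{-1}}$. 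The first term becomes $\mathrm{Ad}_{\phi_\l(h_1)} X_\l(h_2)$ and the second becomes $X_\l(h_1)$, so that $X_\l(h_1 h_2) = X_\l(h_1) + \mathrm{Ad}_{\phi_\l(h_1)} X_\l(h_2)$, which is exactly the $1$-cocycle condition in the differentiable cochain complex $C^\bullet(H;\phi_\l^*\mathrm{Ad})$ (whose degree $n$ part is $C^\infty(H^n,\gg)$). Smoothness of $\l \mapsto X_\l$ is clear from the construction. Throughout, ``smoothly transgressed'' will mean: there is a smooth family $\l\mapsto \xi_\l \in \gg = C^0(H;\phi_\l^*\mathrm{Ad})$ with $X_\l = \delta_{\phi_\l}\xi_\l$, where $(\delta_{\phi_\l}\xi)(h) = \mathrm{Ad}_{\phi_\l(h)}\xi - \xi$ (the sign being a harmless convention).

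\smallskip
\noindent\emph{Trivial $\Rightarrow$ transgressible.} Suppose $\phi_\e(h) = g_\e\,\phi_0(h)\,g_\e^{-1}$ with $g_0 = e$. For fixed $\l$, set $k_\e = g_\e g_\l^{-1}$, so $k_\l = e$ and $\phi_\e(h) = k_\e\,\phi_\l(h)\,k_\e^{-1}$. Differentiating this at $\e = \l$ and writing $\eta_\l = d R_{g_\l^{-1}}\dot g_\l \in \gg$ for the right logarithmic derivative of $g_\e$, a short computation gives
\[
X_\l(h) \;=\; d R_{\phi_\l(h)^{-1}}\dot\phi_\l(h) \;=\; \eta_\l - \mathrm{Ad}_{\phi_\l(h)}\eta_\l \;=\; -\,(\delta_{\phi_\l}\eta_\l)(h).
\]
Thus $\xi_\l := -\eta_\l$ transgresses $X_\l$, and it depends smoothly on $\l$ since $\eta_\l$ is the right logarithmic derivative of the smooth curve $g_\e$.

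\smallskip
\noindent\emph{Transgressible $\Rightarrow$ trivial (Moser argument).} Given a smooth family $\xi_\l \in \gg$ with $X_\l(h) = \xi_\l - \mathrm{Ad}_{\phi_\l(h)}\xi_\l$, I would solve the ordinary differential equation
\[
\frac{d}{d\e} g_\e \;=\; d R_{g_\e}(\xi_\e), \qquad g_0 = e,
\]
in $G$, which admits a solution on the whole parameter interval because time-dependent right-invariant vector fields on a Lie group are complete. Then I would compute $\frac{d}{d\e}\big(g_\e^{-1}\phi_\e(h) g_\e\big)$ using the Leibniz rule, the relations $\frac{d}{d\e}g_\e^{-1} = -\,d L_{g_\e^{-1}}d R_{g_\e^{-1}}(\xi_\e)$ and $\dot\phi_\e(h) = d R_{\phi_\e(h)}(X_\e(h))$, and the transgression identity; the three contributions cancel and the derivative vanishes identically. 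Hence $g_\e^{-1}\phi_\e(h) g_\e \equiv g_0^{-1}\phi_0(h) g_0 = \phi_0(h)$, i.e. $\phi_\e(h) = g_\e\,\phi_0(h)\,g_\e^{-1}$, so the deformation is trivial.

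\smallskip
\noindent\emph{Main obstacle.} The genuine analytic input is the solvability of $\frac{d}{d\e}g_\e = d R_{g_\e}(\xi_\e)$ on the full parameter interval (completeness of the time-dependent right-invariant vector field), which I would invoke as a standard fact; everything else is bookkeeping. The part most prone to sign errors is keeping track of left versus right translations and of $\mathrm{Ad}$ in the Leibniz computations for both the cocycle identity and the Moser cancellation, so I would set all conventions (product rule, $\mathrm{Ad}_g = dL_g\circ dR_{g^{-1}}$, the coboundary $\delta_{\phi_\l}$) explicitly at the outset.
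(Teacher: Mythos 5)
Your proposal is correct, and its overall strategy is the same Moser-type argument as the paper's: differentiate the homomorphism identity to get the cocycle, differentiate the conjugation relation to transgress in the trivial direction, and integrate the right-invariant time-dependent vector field determined by the transgression for the converse. Two points of comparison are worth recording. First, in the converse you conclude by showing directly that $\frac{d}{d\e}\bigl(g_\e^{-1}\phi_\e(h)g_\e\bigr)=0$, whereas the paper shows that $\e\mapsto\phi_\e(h)$ and $\e\mapsto g_\e\phi(h)g_\e^{-1}$ are both integral curves of the time-dependent field $Z_\e(g)=dR_g(u_\e)-dL_g(u_\e)$ through $\phi(h)$ and invokes uniqueness; these are the same computation in two guises (and your cancellation does check out, e.g.\ after right-translating, the three terms give $\mathrm{Ad}_{g_\e^{-1}}\bigl(X_\e(h)-\xi_\e+\mathrm{Ad}_{\phi_\e(h)}\xi_\e\bigr)=0$). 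Second, you invoke completeness of the time-dependent right-invariant field to solve $\dot g_\e=dR_{g_\e}(\xi_\e)$ on the whole interval and hence obtain global triviality, while the paper deliberately only takes the flow on $(-\e',\e')$ and proves the \emph{local} statement, flagging completeness as an extra hypothesis for the global one. Your completeness claim is in fact true for finite-dimensional Lie groups (right-translate local solutions based at the identity, or note the field has constant norm for a complete right-invariant metric), so your argument legitimately yields the stronger global conclusion, but since this is the one genuine analytic input you should prove it rather than merely cite it. Finally, a small slip: the displayed identity $\frac{d}{d\e}g_\e^{-1}=-dL_{g_\e^{-1}}dR_{g_\e^{-1}}(\xi_\e)$ should have $\dot g_\e$ in the argument, which with $\dot g_\e=dR_{g_\e}(\xi_\e)$ simplifies to $-dL_{g_\e^{-1}}(\xi_\e)$; with that correction the bookkeeping closes as claimed.
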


To be precise, the result stated above holds under an extra completeness assumption. What we will actually prove is a local version of this result (see Theorem \ref{thm:homo:trivial}).
 
A similar result will be stated and proved for deformations of Lie subgroups (Theorem \ref{thm:trivial subgroup}). We also consider the problem of determining when a deformation $\phi_\e$ of $\phi: H \to G$ is trivial with respect to the full group of automorphisms of $G$. We will call such deformations \emph{weakly trivial}. In order to deal with this problem, we note that for each $\e$ we obtain a homomorphism
\[\phi_\e^*: H^*(G,\gg) \longrightarrow H^*_{\phi_\e}(H,\gg),\quad \phi_\e^*[c] = [\phi_\e^*(c)].\]
We will say that a family $[X_\e] \in H_{\phi_\e}^1(H, \gg)$ has a \emph{smooth pre-image in $H^1(G,\gg)$} if there exist  smooth families $Z_\e \in C^1_{\mathrm{cl}}(G,\gg)$ and $u_\e \in \gg$ such that
\[\phi_\e^*(Z_\e) = X_\e +\delta_{\phi_\e}(u_\e),\]
where $\delta_{\phi_\e}: \gg \to C^1_{\phi_\e}(H, \gg)$ denotes the differential of the complex computing the cohomology of $H$ with values in the pullback by $\phi_\e$ of the adjoint representation of $G$. In other words, the family of cohomology classes $[Z_\e] \in H^1(G,\gg)$ is a smooth pre-image of $[X_\e] \in H_{\phi_\e}^1(H, \gg)$. We will show that
\begin{theorem}
Let $\phi_\e: H \to G$ be a smooth family of Lie group homomorphisms and let $X_\e$ be its deformation cocycle. Then $\phi_\e$ is weakly trivial if and only if $[X_\e]$ has a smooth pre-image in $H^1(G,\gg)$.
\end{theorem}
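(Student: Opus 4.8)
The plan is to translate weak triviality into an integration problem for a smooth family of $\mathrm{Ad}$-cocycles on $G$ and then to run a Moser-type flow argument — the same circle of ideas behind Theorem \ref{thm:homo:trivial}, but now producing a curve of \emph{automorphisms} of $G$ rather than a curve of conjugations. Recall that $\phi_\e$ is weakly trivial precisely when $\phi_\e = \psi_\e \circ \phi_0$ for a smooth path $\psi_\e$ in $\mathrm{Aut}(G)$ with $\psi_0 = \mathrm{id}_G$. Before starting I would dispose of the auxiliary term $u_\e$: since $\phi_\e^*$ is a morphism of complexes which is the identity on $0$-cochains, one has $\delta_{\phi_\e}(u_\e) = \phi_\e^*(\delta_G u_\e)$, where $\delta_G$ is the differential of the complex computing $H^*(G,\gg)$; hence $\phi_\e^*(Z_\e) = X_\e + \delta_{\phi_\e}(u_\e)$ is equivalent to $\phi_\e^*(Z_\e - \delta_G u_\e) = X_\e$, and $Z_\e - \delta_G u_\e$ is still a smooth family in $C^1_{\mathrm{cl}}(G,\gg)$. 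So it suffices to prove: $\phi_\e$ is weakly trivial if and only if there is a smooth family $Z_\e \in C^1_{\mathrm{cl}}(G,\gg)$ with $\phi_\e^*(Z_\e) = X_\e$.

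For the ``only if'' direction I would start from $\phi_\e = \psi_\e \circ \phi_0$ and view $\psi_\e$ as a deformation of $\mathrm{id}_G \colon G \to G$. Theorem \ref{thm:homo:trivial} then supplies a smooth family of $1$-cocycles $W_\e$ with $W_\e(ab) = W_\e(a) + \mathrm{Ad}_{\psi_\e(a)}W_\e(b)$. Since $\psi_\e^{-1}$ is a homomorphism, a one-line check shows that $Z_\e := (\psi_\e^{-1})^* W_\e$, i.e.\ $Z_\e(g) = W_\e(\psi_\e^{-1}(g))$, satisfies $Z_\e(gh) = Z_\e(g) + \mathrm{Ad}_g Z_\e(h)$, so $Z_\e \in C^1_{\mathrm{cl}}(G,\gg)$, smoothly in $\e$. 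Then a direct chain-rule computation of the deformation cocycle of the composite $\psi_\e \circ \phi_0$ — differentiating $\psi_\e(\phi_0(h))$ in $\e$ and right-translating, and using $\phi_0 = \psi_\e^{-1}\circ\phi_\e$ — gives $X_\e = \phi_0^*W_\e = \phi_\e^*\big((\psi_\e^{-1})^*W_\e\big) = \phi_\e^*(Z_\e)$, so $[Z_\e]$ is the required smooth pre-image.

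For the ``if'' direction, given a smooth family $Z_\e \in C^1_{\mathrm{cl}}(G,\gg)$ with $\phi_\e^*(Z_\e) = X_\e$, I would form the time-dependent vector field $\xi_\e$ on $G$ with $\xi_\e(x) = d_e R_x\big(Z_\e(x)\big)$ and let $\psi_\e$ be its flow with $\psi_0 = \mathrm{id}_G$ (note that $Z_\e(e) = 0$ for a cocycle, so $e$ is a fixed point of the flow). The proof is then two applications of uniqueness of integral curves. First, for fixed $g,h$ the Leibniz rule for multiplication in $G$ together with the defining ODE of $\psi_\e$ give
\[\frac{d}{d\e}\big(\psi_\e(g)\psi_\e(h)\big) = d_e R_{\psi_\e(g)\psi_\e(h)}\Big(Z_\e(\psi_\e(g)) + \mathrm{Ad}_{\psi_\e(g)}Z_\e(\psi_\e(h))\Big) = d_e R_{\psi_\e(g)\psi_\e(h)}\big(Z_\e(\psi_\e(g)\psi_\e(h))\big),\]
the last step being the cocycle identity for $Z_\e$; hence $\e \mapsto \psi_\e(g)\psi_\e(h)$ and $\e \mapsto \psi_\e(gh)$ are both integral curves of $\xi_\e$ through $gh$, so they agree and $\psi_\e$ is a homomorphism — and a diffeomorphism, being a flow, hence an automorphism with $\psi_0 = \mathrm{id}_G$. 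Second, for fixed $h$ the curve $\e \mapsto \phi_\e(h)$ has velocity $d_e R_{\phi_\e(h)}\big(X_\e(h)\big) = d_e R_{\phi_\e(h)}\big(Z_\e(\phi_\e(h))\big) = \xi_\e(\phi_\e(h))$, so it is the integral curve of $\xi_\e$ through $\phi_0(h)$; but so is $\e \mapsto \psi_\e(\phi_0(h))$, whence $\phi_\e = \psi_\e \circ \phi_0$ and $\phi_\e$ is weakly trivial.

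The only genuinely delicate point — exactly as flagged for Theorem \ref{thm:homo:trivial} — is that $\psi_\e$ exists as a global diffeomorphism of $G$ over the whole parameter interval only when the flow of $\xi_\e$ is complete; under that completeness assumption the argument above is complete, and in its absence one obtains the corresponding local statement. Everything else — the cocycle and chain-rule identities and the smooth dependence on $\e$ — is routine bookkeeping.
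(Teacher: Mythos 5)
Your proof is correct, and its core is the same Moser argument as the paper's proof of Theorem \ref{thm:loc-triv-homo}: differentiate $\phi_\e=\psi_\e\circ\phi$ to produce a smooth family of $\mathrm{Ad}$-cocycles on $G$, and conversely flow the right-translated vector field $\xi_\e(x)=d_eR_x(Z_\e(x))$, using uniqueness of integral curves together with the cocycle identity to see that the flow is a family of automorphisms. Where you genuinely deviate is the treatment of the coboundary term: you absorb $u_\e$ at the outset via $\delta_{\phi_\e}(u_\e)=\phi_\e^*(\delta u_\e)$, replacing $Z_\e$ by $Z_\e-\delta u_\e$, and can then conclude $\phi_\e=\psi_\e\circ\phi$ directly by a second uniqueness-of-integral-curves argument (the curve $\e\mapsto\phi_\e(h)$ is itself an integral curve of $\xi_\e$). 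The paper instead keeps $u_\e$, builds $F_\e$ from $Z_\e$ alone, and then shows that $F_\e^{-1}\circ\phi_\e$ has deformation cocycle transgressed by $\tilde u_\e$ with $u_\e=dF_\e(\tilde u_\e)$, so that Theorem \ref{thm:homo:trivial} yields $\phi_\e=F_\e\circ I_{g_\e}\circ\phi$. Your reduction buys a shorter, self-contained converse (no second appeal to Theorem \ref{thm:homo:trivial} and no $\tilde u_\e$ bookkeeping), and in the forward direction you also supply the verification, only asserted in the paper, that the pre-image $Z_\e$ is indeed a closed cochain on $G$ (via the deformation cocycle $W_\e$ of $\psi_\e$ viewed as a deformation of $\mathrm{id}_G$ and pullback by $\psi_\e^{-1}$; note that what furnishes $W_\e$ is Proposition \ref{def-homo} rather than Theorem \ref{thm:homo:trivial}). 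What the paper's longer route buys is the explicit factorization of the weak equivalence into an ``outer'' part $F_\e$ generated by $Z_\e$ and an inner part $I_{g_\e}$ coming from the transgression, which makes the relation between triviality and weak triviality transparent. Finally, your completeness caveat for the flow of $\xi_\e$ is exactly the right one and matches the paper's own proviso: the body proves the local statement (Theorem \ref{thm:loc-triv-homo}), and the global statement above requires the flow to be defined on all of $G$ for the relevant values of $\e$.
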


We remark that the presence of the ``extra smoothness hypothesis" is unavoidable in our theorems. This is due to the geometric approach we use to prove the theorems (see the discussion on Moser's argument below). However, when $H$ is compact we can use a Haar measure on $H$ to provide explicit transgressions to $\delta_{\phi_\e}$. With this we obtain, for example, the following result.

\begin{theorem}\label{thm:compact:homo:stable}
Let $H$ be a compact Lie group. Then every Lie group homomorphism $\phi: H \to G$ admits only trivial deformations.
\end{theorem}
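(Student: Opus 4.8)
The plan is to reduce the statement to Theorem~\ref{thm:homo:trivial}. Let $\phi_\e$ be a deformation of $\phi=\phi_0$, with $\e$ ranging over an interval containing $0$, and let $X_\lambda\in C^1_{\phi_\lambda}(H,\gg)$ be the associated family of deformation $1$-cocycles. By Theorem~\ref{thm:homo:trivial} (upgrading, if needed, from its local form by the standard patching of the resulting conjugating curves over the parameter interval), it suffices to exhibit a \emph{smooth} transgression of this family: a smooth curve $\lambda\mapsto u_\lambda\in\gg$ with $X_\lambda=\delta_{\phi_\lambda}(u_\lambda)$ for all $\lambda$.

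The transgression is produced by averaging against a normalized Haar measure $dh$ on the compact group $H$. Recall that a differentiable $1$-cocycle for a representation $\rho$ of $H$ on a vector space $V$ satisfies $X(h_1h_2)=X(h_1)+\rho(h_1)X(h_2)$; integrating this identity in $h_2$ over $H$ and using translation-invariance of $dh$, one finds that $u:=\int_H X(h)\,dh\in V$ obeys $u=X(h_1)+\rho(h_1)u$, i.e. $X=-\,\delta_\rho u$ where $\delta_\rho u(h)=\rho(h)u-u$. Applying this with $V=\gg$ and $\rho=\mathrm{Ad}\circ\phi_\lambda$ gives, up to the sign convention for $\delta_{\phi_\lambda}$,
\[
u_\lambda \ :=\ -\int_H X_\lambda(h)\,dh, \qquad X_\lambda=\delta_{\phi_\lambda}(u_\lambda).
\]
So each $X_\lambda$ is a coboundary — which also follows abstractly from the vanishing of the differentiable cohomology of a compact group in positive degrees — but it is the explicit integral formula that will let us control the dependence on $\lambda$.

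It remains to check that $\lambda\mapsto u_\lambda$ is smooth. Since $\phi_\e(h)$ is smooth jointly in $(\e,h)$, so is $\frac{d}{d\e}\big|_{\e=\lambda}\phi_\e(h)$, and hence so is $X_\lambda(h)=d_{\phi_\lambda(h)}R_{\phi_\lambda(h)^{-1}}\frac{d}{d\e}\big|_{\e=\lambda}\phi_\e(h)$, being obtained from these by smooth operations on $G$. As $H$ is compact, differentiation under the integral sign is justified and $\lambda\mapsto u_\lambda=-\int_H X_\lambda(h)\,dh$ is smooth. This exhibits $\{X_\lambda\}$ as a smoothly transgressible family, and Theorem~\ref{thm:homo:trivial} then yields that the deformation $\phi_\e$ is trivial.

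The only delicate point — and precisely where compactness of $H$ is genuinely used — is this last smoothness step: for noncompact $H$ the positive-degree differentiable cohomology need not vanish, and even when it does one cannot in general choose the transgressions $u_\lambda$ smoothly in $\lambda$, which is the obstruction recorded by the ``extra smoothness hypothesis'' in the general theorems. I expect no further difficulty: the averaging formula is elementary, joint smoothness of $X_\lambda(h)$ is immediate from the definition of a deformation, and the passage from the local form of Theorem~\ref{thm:homo:trivial} to a global conjugation is the usual composition-of-local-solutions argument on the parameter interval.
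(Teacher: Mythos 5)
Your proposal is correct and follows essentially the same route as the paper's own proof of Theorem~\ref{RigidityMorphisms}: average the deformation cocycle against a normalized left-invariant Haar measure to get the explicit smooth transgression $u_\e=-\int_H X_\e(h)\,dh$ with $X_\e=\delta_{\phi_\e}(u_\e)$, then invoke Theorem~\ref{thm:homo:trivial}. The only cosmetic difference is how globality in $\e$ is handled (you patch local conjugating curves, while the paper notes that the relevant flow is defined for all $\e\in I$), and both arguments are fine.
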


Similarly, we obtain the following result for compact Lie subgroups of $G$.

\begin{theorem}
Let $H \subset G$ be a compact Lie subgroup. Then every deformation of $H$ in $G$ is trivial.
\end{theorem}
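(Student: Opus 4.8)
The plan is to prove this in complete analogy with Theorem~\ref{thm:compact:homo:stable}, replacing the Moser-type argument by an averaging argument that is available precisely because $H$ is compact. By Theorem~\ref{thm:trivial subgroup}, a deformation $H_\e$ of the compact subgroup $H\subset G$ produces, for each parameter $\l$, a $1$-cocycle $Y_\l$ in the complex computing the differentiable cohomology of $H$ with coefficients in the $H$-module associated to the deformation (concretely, $\gg/\hh_\l$ with its induced adjoint action, pulled back along a smoothly varying identification $H\cong H_\l$); moreover the deformation is trivial exactly when the family $\{Y_\l\}$ admits a smooth transgression, i.e.\ when there is a smooth family $u_\l$ with $\delta_\l u_\l=Y_\l$. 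So the goal reduces to producing such a smooth transgression.

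The second step is to write down the transgression explicitly using Haar measure, exactly as the authors do for $\delta_{\phi_\e}$ in the homomorphism case. Fixing the normalized bi-invariant Haar measure $dh$ on the compact group $H$, for any smooth $H$-representation $V$ and any $1$-cocycle $c\in C^1(H,V)$ the element $u_c=-\int_H c(h)\,dh\in V$ satisfies $\delta u_c=c$: from the cocycle identity $c(hk)=c(h)+h\cdot c(k)$ and invariance of Haar measure one computes $(\delta u_c)(h)=h\cdot u_c-u_c=\int_H\bigl(c(h)-c(hk)+c(k)\bigr)\,dk=c(h)$, since $\int_H c(hk)\,dk=\int_H c(k)\,dk$. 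Applying this to $Y_\l$ for each $\l$ yields $u_\l$ with $\delta_\l u_\l=Y_\l$, and $\l\mapsto u_\l$ is smooth because integrating a smoothly varying integrand over the compact manifold $H$ is smooth in parameters. Hence $\{Y_\l\}$ is smoothly transgressed and, by Theorem~\ref{thm:trivial subgroup}, the deformation is trivial.

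The point that will require care --- and the main obstacle --- is that the coefficient module is not constant along the deformation: both $\hh_\l$ and the chosen identification $H\cong H_\l$ move with $\l$, so the complexes housing the $Y_\l$ form only a smoothly varying, rather than a fixed, family. One therefore has to trivialize this family over a parameter interval (say by a smooth family of linear isomorphisms $\gg/\hh_\l\cong\gg/\hh$, equivalently a connection on the associated vector bundle over parameter space) and check that the Haar-averaging is compatible with the trivialization up to smooth terms, so that the $u_\l$ produced above is a genuine smooth transgression in the sense of Theorem~\ref{thm:trivial subgroup}; this is the precise place where compactness and the Haar measure do the work that Moser's argument does in the non-compact setting. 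An alternative route, which sidesteps the subgroup cohomology entirely, is to first straighten the moving subgroups into a smooth family of injective homomorphisms $\phi_\e\colon H\to G$ with $\phi_0$ the inclusion (using compactness of $H$ together with rigidity of the group structure of a compact Lie group), then apply Theorem~\ref{thm:compact:homo:stable} to obtain a smooth curve $g_\e$ in $G$ with $g_0=e$ and $\phi_\e=g_\e\,\phi_0(\cdot)\,g_\e^{-1}$, so that $H_\e=\phi_\e(H)=g_\e H g_\e^{-1}$ --- which is exactly triviality of the deformation of $H$; here the obstacle is instead the straightening step.
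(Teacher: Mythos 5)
Your overall strategy---transgress the deformation cocycle by Haar averaging and then invoke Theorem \ref{thm:trivial subgroup}---is exactly the paper's strategy, but the two points you yourself flag as "requiring care" are genuine gaps, and neither is closed. First, the deformation cocycle $\bar{X}_\l$ of Proposition \ref{def-subgroup} lives in $C^1_{\i_\l}(H_\l,\gg/\hh_\l)$, where $H_\l=(H,m_\l,i_\l)$ carries the \emph{deformed} multiplication; its cocycle identity is with respect to $m_\l$. Averaging against the fixed bi-invariant Haar measure of $(H,m_0)$ therefore does not yield a transgression: your substitution $\int_H c(h\cdot_\l k)\,dk=\int_H c(k)\,dk$ requires invariance of the measure under $m_\l$-translations, not $m_0$-translations. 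You propose to repair this by pulling everything back "along a smoothly varying identification $H\cong H_\l$", but no such smooth family of group isomorphisms is given by the data---producing one is essentially the rigidity of compact Lie group structures, which is also the unproven "straightening step" of your alternative route; it is a nontrivial external input that the paper neither proves nor uses. What the paper uses instead is a normalized left-invariant \emph{Haar system} for the family $H_\e$, i.e.\ Haar measures invariant for $m_\e$ and depending smoothly on $\e$ (existence cited from Renault and Crainic); with that system your averaging computation is the correct one.

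Second, the moving-coefficients issue you call "the main obstacle" is left open in your write-up ("check that the Haar-averaging is compatible with the trivialization up to smooth terms" is precisely the missing verification), and it has a much simpler resolution than a connection or a trivialization of the family $\gg/\hh_\l$. Smooth transgression in the sense of Theorem \ref{thm:trivial subgroup} asks for a smooth curve $u_\e$ in the \emph{fixed} vector space $\gg$ with $\bar{X}_\e=\bar{\delta}_{\i_\e}(u_\e \bmod \hh_\e)$. So one averages not the quotient-valued cocycle but its canonical $\gg$-valued lift $X_\e(h)=dR_{\i_\e(h)^{-1}}\left(\frac{d}{d\e}\i_\e(h)\right)$, setting $u_\e=-\int_{H_\e}X_\e(h)\,dh\in\gg$: smoothness of $\e\mapsto u_\e$ is immediate from smoothness of the Haar system, and since for each fixed $\e$ the cocycle identity for $X_\e$ holds modulo the fixed subspace $\hh_\e$ and integration preserves congruences modulo $\hh_\e$, the computation of Theorem \ref{RigidityMorphisms} goes through verbatim to give $\bar{\delta}_{\i_\e}(\bar{u}_\e)=\bar{X}_\e$. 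With these two corrections your first route becomes the paper's proof (to which one should add, as the paper does, that compactness of $H$ makes the relevant flows complete, upgrading local to global triviality---a point you also omit). Your alternative route via Theorem \ref{thm:compact:homo:stable} would indeed sidestep the subgroup cohomology, but it stands or falls with the straightening step you concede is an obstacle, so as written it is not a proof either.
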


\subsection*{Comparison to Existing Results} The problem of understanding the space of group homomorphisms and Lie subgroups near to a fixed one is not new. In \cite{NijenhuisRichardson}, the authors topologize the space of Lie group homomorphisms from $H$ to $G$ using the compact open topology and consider the $G$-action through inner automorphisms of $G$ on the space $\mathrm{Hom}(H,G)$ of Lie group homomorphisms. They sketch a proof of a theorem which states that if $H^1_{\phi}(H,\gg) = 0$ then the $G$-orbit of $\phi$ is open in  $\mathrm{Hom}(H,G)$, and therefore every nearby homomorphism is conjugate to $\phi$. A complete proof of this theorem can be found in \cite{lee1974deformationshom}, where the author allows $H$ to be a compactly generated locally compact group. This theorem is very closely related to our Theorems \ref{thm:homo:trivial} and \ref{thm:compact:homo:stable}. On the one hand, the conclusion of their theorem is stronger than ours since our conclusions only hold for paths of homomorphims while theirs is topological. If we could show that the space of Lie group homomorphisms is locally path connected in a neighbourhood of $\phi$ then one might expect to obtain their results from ours. However we do not know if such a result is true. On the other hand, our hypothesis is in a way weaker than theirs. To begin we do not ask for the vanishing of the entire cohomology group, but only the (smooth) vanishing of the deformation class to obtain our conclusion. Moreover, even if $\mathrm{Hom}(H,G)$ were locally path connected, our deformations would be more general than the deformations allowed in \cite{NijenhuisRichardson}. In fact, since they use the compact-open topology, their deformations are constrained to compact subsets of $H$, while we allow deformations which vary at infinity. However, when $H$ is compact it follows that $H^1_\phi(H,\gg)$ vanishes for all $\phi$ and we obtain a smooth transgressions of the deformation class of any deformation $\phi_\e$. In this way we obtain a parametric version of their result.

In \cite{Coppersmith}, the author deals with deformations of a Lie subgroup $H \subset G$. Similarly to our approach, the problem that is described is that of controlling the triviality of families of Lie subgroups. The key ingredient in the proof of his first main theorem is the Implicit Function Theorem which he applies under the condition that the whole cohomology $H^1(H, \gg/\mathfrak{h})$ vanishes. He then obtains that every deformation of $H$ is locally trivial. On the other hand, we again do not impose the vanishing of the entire cohomology group. Our result is valid for subgroups which may admit non-trivial deformations. We characterize the trivial deformations of such a subgroup as being those for which the deformation class vanishes smoothly in cohomology. When $H \subset G$ is a compact Lie subgroup we use an explicit transgression of the deformation class and recover the result of \cite{Coppersmith}. Coppersmith also discusses the obstructions to the existence of deformations with prescribed deformation class. We do not deal with this problem in this paper.

\subsection*{The Moser Deformation Argument} Our versions of the results on deformations of Lie group homomorphisms and Lie subgroups were made possible due to the technique we employ. Our approach is an adaptation of the Moser Deformation Argument to the context of deformations of homomorphisms and Lie subgroups. The Moser Deformation Argument is a classical technique in symplectic geometry used  to understand when a deformation of a symplectic structure is trivial.

In order to motivate the techniques used in this paper we briefly explain Moser's Deformation Argument in symplectic geometry. Recall that a symplectic structure on a manifold $M$ is a $2$-form $\omega \in \Omega^2(M)$ which is closed under the de Rham differential ($d\omega = 0$) and is non-degenerate in the sense that for every $x \in M$, if $u \in T_xM$ is a tangent vector then $\omega(u,v) = 0$ for all $v \in T_xM$ if and only if $u = 0$. Suppose that $\omega_\e$ is a smooth family of symplectic structures on $M$. Since each $\omega_\e$ is closed under the de Rham differential it follows that $\frac{d}{d\e}\omega_\e$ is also a cocycle in the de Rham cohomology of $M$ for all $\e$. The claim is then that there exists a smooth family $\phi_\e: M \to M$ (defined for small values of $\e$) of diffeomorphisms such that $\phi_0 = \mathrm{Id}_M$ and $\phi_\e^*\omega = \omega_\e$ if and only if the cocylcles  $\frac{d}{d\e}\omega_\e$ can be smoothly transgressed, i.e., if there exists a smooth family $\alpha_\e \in \Omega^1(M)$ of $1$-forms such that
\[d\alpha_\e = \frac{d}{d\e}\omega_\e\]
for all small values of $\e$.

To prove this claim one uses the following argument, which is known as the Moser Deformation Argument. Assume there exists $\phi_\e$ satisfying the conditions of the claim. Differentiating the condition
\[\phi_\e^*\omega = \omega_\e\]
with respect to $\e$ one obtains that
\[\Lie_{X_\e}\omega = \frac{d}{d\e}\omega_\e,\]
where $\Lie_{X_\e}$ denotes the Lie derivative with respect to the vector field $X_\e = \frac{d}{d\e}\phi_\e$. Using Cartan's magic formula and the fact that $d\omega = 0$ one obtains that
\[\Lie_{X_\e}\omega = d(\omega(X_\e)) = \frac{d}{d\e}\omega_\e\]
for all $\e$. But then one can take $\alpha_\e = \omega(X_\e)$ to conclude one of the implications of the claim. 

To prove the converse, suppose that there exists a smooth family $\alpha_\e$ of $1$-forms which satisfy $d\alpha_\e = \frac{d}{d\e}\omega_\e$. Since $\omega$ is non-degenerate, there exists a unique smooth time dependent vector field $X_\e$ on $M$ such that $\omega(X_\e) = \alpha_\e$ for all $\e$. Then the computations done in the first part of the proof show that the flow $\phi_\e$ of the time dependent vector field $X_\e$ satisfies $\phi_\e^*\omega = \omega_\e$ (for flows of time dependent vector fields see e.g., \cite{Time-varyingvectorfieldsandtheirflows}). To be precise, we remark that this argument is valid as long as the flow $\phi_\e$ is defined at all points for $\e \in [0,\e']$.

In this paper we use a similar approach to prove the triviality of deformations of Lie group homomorphisms and Lie subgroups. We identify the relevant cohomology theory controlling the deformation and prove that the deformation cocycle can be smoothly transgressed if and only if the deformation is (locally) trivial.

We remark that a similar approach has also been used in \cite{CMS} to understand when a deformation of Lie groupoids (and in particular Lie groups) is trivial. Also in \cite{Lory} these techniques were used to address deformations of Lie group representations.

This paper is organised as follows. In Section \ref{gp cohomology} we recall the definition of the differentiable cohomology of a Lie group with values in a representation and describe the special cases which are relevant for controlling the deformation problems. In Section \ref{results on morphisms} we describe our results for deformations of Lie group homomorphisms and in Section \ref{results on subgroups} we describe the results for Lie subgroups.

\subsection*{Acknowledgements} We would like to thank Jo\~ao Nuno Mestre for his valuable comments on a first version of this paper.


\section{Lie group cohomology}\label{gp cohomology}


In this section we briefly recall the definition of the differentiable cohomology of a Lie group $G$ with coefficients in a representation $\rho: G \to \mathrm{GL}(V)$, and describe the representations that will be useful for our purposes. 

Let $G$ be a Lie group and $\rho:G\rightarrow GL(V)$ be a representation of $G$. The \textbf{cochain complex of} $G$ \textbf{with coefficients in} $V$ is defined as follows: the cochains $C^{k}(G,V)$ of degree $k$ are smooth functions on $G^{k}$ (the cartesian product of $k$ copies of $G$) with values in $V$. The differential $\delta_{\rho}:C^{k}(G,V)\rightarrow C^{k+1}(G,V)$ is defined by
\begin{align*}
\delta_{\rho}c(g_1,...g_{k+1}):=\rho(g_1)c(g_2,...,g_{k+1})&+\sum^{k}_{i=1}(-1)^{i}c(g_{1},...g_ig_{i+1},...g_{k+1})\\
&+(-1)^{k+1}c(g_1,...,g_{k}).
\end{align*}
For any representation $\rho$ of $G$, we have $\delta_{\rho}^{2}=0$; the resulting cohomology is denoted by $H^{k}_{\rho}(G,V)$. The following examples will be useful for us.

\begin{example}
The adjoint complex of $G$ is the complex obtained by taking $V$ to be the Lie algebra $\Lieg$ of $G$, and $\rho$ to be the adjoint representation of $G$ on $\gg$. In this case, the differential of the cochain complex will be simply denoted by $\delta$, and the cohomology groups will be denoted by $H^{k}(G,\Lieg)$.
\end{example}

\begin{example}
If $\phi:H\rightarrow G$ is a homomorphism, then the adjoint representation of $G$ pulls-back to a representation of $H$, $\rho=\mathrm{Ad}\circ\phi:H\rightarrow GL(\Lieg)$. In this case, we will denote the differential of the resulting cochain complex by $\delta_{\phi}$ and its cohomology by $H^{*}_{\phi}(H,\Lieg)$.
\end{example}

\begin{example}
When $H\subset G$ is a Lie subgroup with Lie algebra $\hh \subset \gg$, the adjoint action of $H$ on $\gg$ induces a representation of $H$ on the quotient vector space $V = \Lieg/\Lieh$. In this case, we will  again denote the differential of the cochain complex by $\delta$ and we will denote the resulting cohomology groups by $H^{*}(H,\Lieg/\Lieh)$.
\end{example}

\begin{remark}
By putting together the two previous examples, if $\phi:H\rightarrow G$ is a homomorphism and $\Lieh$ is the Lie algebra of $H$, one obtains a representation of $H$ on $\gg/\tilde{\hh}$, where $\tilde{\hh}$ is the Lie algebra of $\phi(H)$. The differential of the resulting complex will be denoted by $\bar{\delta}_{\phi}$ and its cohomology groups will be denoted by $H^{k}_{\phi}(H,\Lieg/\tilde{\Lieh})$.
\end{remark}

\begin{remark}
Observe that, with the same notations as in the previous remark, there is a natural cochain-map (induced by $\phi$) between these complexes:
$$\phi_{*}: C^{*}(H,\Lieh)\rightarrow C^{*}_{\phi}(H,\tilde{\Lieh})\subset C^{*}_{\phi}(H,\Lieg).$$
\end{remark}

\section{Deformations and Stability of Lie Group Homomorphisms}\label{results on morphisms}
In this section we state and prove our main results for Lie group homomorphisms. Let $G$ and $H$ be Lie groups and let $\phi: H \to G$ be a Lie group homomorphism. 

\begin{definition}
A \textbf{deformation of $\phi$} is a smooth family $\phi_\e: H \to G$ of Lie group homomorphisms such that $\phi_0 = \phi$. Two deformations $\phi_\e$ and $\phi'_\e$ of $\phi$ are \textbf{equivalent} if there exists a smooth curve $g_\e: I \to G$ such that $g_\e(0) = 1$, and $\phi_\e = I_{g_\e} \circ \phi'_\e$ for all $\e \in I$, where $I_{g_\e}: G \to G$ denotes conjugation by $g_\e$. 
\end{definition}

\begin{remark}
We will also be interested in deformations which are only \textbf{locally equivalent}. For this, we only demand that $\phi_\e = I_{g_\e} \circ \phi'_\e$ for small enough values of $\e$.
\end{remark}

\begin{proposition}\label{def-homo}
Let $\phi_\e: H \to G$ be a deformation of $\phi$. Then for each $\lambda \in I$, the cochain
\[X_\lambda(h) = d_{\phi_{\lambda}(h)}R_{\phi_{\lambda}(h)^{-1}}\frac{d}{d\e}|_{\e = \lambda}\phi_\e(h) \in C^1_{\phi_\lambda}(H, \gg)\]
is a cocycle. 

Moreover, the cohomology class when $\lambda = 0$, $[X_0] \in H^1_{\phi}(H, \gg)$, only depends on the (local) equivalence class of the deformation $\phi_\e$.
\end{proposition}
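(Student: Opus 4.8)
The plan is to establish both statements by differentiating, at $\e=\lambda$ and at $\e=0$ respectively, the two identities that govern the situation: the homomorphism property $\phi_\e(h_1h_2)=\phi_\e(h_1)\phi_\e(h_2)$ of each $\phi_\e$, and the conjugation relation $\phi_\e=I_{g_\e}\circ\phi'_\e$ defining equivalence. The only facts needed are elementary: left and right translations on $G$ commute, $\mathrm{Ad}_g=dR_{g^{-1}}\circ dL_g$ on $\gg=T_1G$ (base points suppressed), and, when $g_0=1$, the derivative at $0$ of $\e\mapsto g_\e^{-1}$ equals minus that of $\e\mapsto g_\e$. Smoothness of $X_\lambda$ as a map $H\to\gg$ is immediate from smoothness of $(\e,h)\mapsto\phi_\e(h)$ together with smoothness of translations, so the real content is the cocycle identity $\delta_{\phi_\lambda}X_\lambda=0$, which for a $1$-cochain with values in the pullback adjoint representation reads
\[X_\lambda(h_1h_2)=X_\lambda(h_1)+\mathrm{Ad}_{\phi_\lambda(h_1)}X_\lambda(h_2).\]

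To prove this, fix $\lambda$ and abbreviate $\dot\phi(h)=\frac{d}{d\e}|_{\e=\lambda}\phi_\e(h)\in T_{\phi_\lambda(h)}G$, so that $X_\lambda(h)=dR_{\phi_\lambda(h)^{-1}}\dot\phi(h)$. Differentiating $\phi_\e(h_1h_2)=\phi_\e(h_1)\phi_\e(h_2)$ at $\e=\lambda$ gives
\[\dot\phi(h_1h_2)=dR_{\phi_\lambda(h_2)}\dot\phi(h_1)+dL_{\phi_\lambda(h_1)}\dot\phi(h_2).\]
I then apply $dR_{\phi_\lambda(h_1h_2)^{-1}}=dR_{\phi_\lambda(h_1)^{-1}}\circ dR_{\phi_\lambda(h_2)^{-1}}$ to both sides: the first term on the right collapses to $X_\lambda(h_1)$ since $R_{\phi_\lambda(h_2)^{-1}}\circ R_{\phi_\lambda(h_2)}=\mathrm{id}$; in the second term I commute $dR_{\phi_\lambda(h_2)^{-1}}$ past $dL_{\phi_\lambda(h_1)}$ and recognize $dR_{\phi_\lambda(h_1)^{-1}}\circ dL_{\phi_\lambda(h_1)}=\mathrm{Ad}_{\phi_\lambda(h_1)}$, so it becomes $\mathrm{Ad}_{\phi_\lambda(h_1)}X_\lambda(h_2)$. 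This is exactly the desired identity, so $X_\lambda\in C^1_{\phi_\lambda}(H,\gg)$ is a cocycle.

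For the dependence only on the (local) equivalence class, assume $\phi_\e=I_{g_\e}\circ\phi'_\e$ with $g_0=1$; then $\phi_0=\phi'_0=\phi$, so $[X_0]$ and $[X'_0]$ both lie in $H^1_\phi(H,\gg)$. Put $u=\frac{d}{d\e}|_{\e=0}g_\e\in\gg$. Differentiating $\phi_\e(h)=g_\e\,\phi'_\e(h)\,g_\e^{-1}$ at $\e=0$, writing the right-hand side as a triple product so that in each of the three terms of the product rule all but one factor sits at the identity at $\e=0$, and using $\frac{d}{d\e}|_{\e=0}g_\e^{-1}=-u$, one obtains
\[\frac{d}{d\e}\Big|_{\e=0}\phi_\e(h)=dR_{\phi(h)}u+\dot\phi'(h)-dL_{\phi(h)}u,\]
where $\dot\phi'(h)=\frac{d}{d\e}|_{\e=0}\phi'_\e(h)$. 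Applying $dR_{\phi(h)^{-1}}$ yields $X_0(h)=u+X'_0(h)-\mathrm{Ad}_{\phi(h)}u$. Since $\delta_\phi u(h)=\mathrm{Ad}_{\phi(h)}u-u$ for the $0$-cochain $u\in\gg$, this reads $X_0=X'_0-\delta_\phi u$, hence $[X_0]=[X'_0]$ in $H^1_\phi(H,\gg)$. Only first-order information at $\e=0$ is used, so the argument applies verbatim when $\phi_\e$ and $\phi'_\e$ are merely locally equivalent.

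I expect the only delicate point to be the bookkeeping of base points when differentiating the triple product $g_\e\,\phi'_\e(h)\,g_\e^{-1}$, together with the sign in the derivative of the inverse; once the product rule is organized so that in each of its three summands all but one factor is frozen at the identity at $\e=0$, the remaining manipulation is a mechanical use of the commutation of left and right translations and of $\mathrm{Ad}_g=dR_{g^{-1}}\circ dL_g$.
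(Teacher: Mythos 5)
Your proof is correct and follows essentially the same route as the paper: differentiate the homomorphism identity at $\e=\lambda$ and right-translate back to the identity to get the cocycle equation, then differentiate the conjugation relation at $\e=0$ to exhibit $X_0-X'_0$ as the coboundary of $u$. The only cosmetic difference is that the paper sidesteps differentiating $g_\e^{-1}$ by first rewriting the equivalence as $\phi_\e(h)\,g_\e=g_\e\,\phi'_\e(h)$, whereas you differentiate the triple product $g_\e\,\phi'_\e(h)\,g_\e^{-1}$ directly; both yield the same identity $X_0(h)=X'_0(h)+u-\mathrm{Ad}_{\phi(h)}u$.
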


\begin{proof}
We begin by proving that $X_\lambda$ is a cocycle. Let us denote by $m_H: H \times H \to H$ the multiplication on the Lie group $H$, and by $m_G: G \times G \to G$ the multiplication on $G$. Then, since $\phi_\e$ is a group homomorphism for all $\e$ we have that
\[\phi_\e(m_H(h_1, h_2)) = m_G(\phi_\e(h_1), \phi_\e(h_2)),\]
for all $\e \in I$. Differentiating both sides of this equation with respect to $\e$ at $\e = \lambda$ we obtain
\[\left.\frac{d}{d\e}\right|_{\e=\l}\phi_\e(m_H(h_1,h_2))-dm_G(\left.\frac{d}{d\e}\right|_{\e=\l}\phi_\e(h_1),\left.\frac{d}{d\e}\right|_{\e=\l}\phi_\e(h_2))=0.\]
Right translating back to the identity, i.e., applying $dR_{\phi_{\l}(m_H(h_1, h_2))^{-1}}$ we obtain the cocycle equation for $X_\l$.

Assume now that $\phi_\e$ and $\phi'_\e$ are equivalent deformations and let $g_\e$ be a curve in $G$ starting at the identity and such that
\[\phi_\e(h) = I_{g_\e} \circ \phi'_\e(h)\]
for all $h \in H$ and $\e \in I$. Equivalently,
\[m_G(\phi_\e(h), g_\e) = m_G(g_\e, \phi'_\e(h)).\]

Differentiating both sides of the equation at $\e = 0$ we obtain
\[\frac{d}{d\e}|_{\e=0}\phi_\e(h) + dL_{\phi(h)}u_0= \frac{d}{d\e}|_{\e=0}\phi'_\e(h) + dR_{\phi(h)}u_0,\]
where $u_0 = \frac{d}{d\e}|_{\e=0}g_\e \in \gg$. Right translating both sides of this equation by $\phi(h)^{-1}$, i.e., by applying $dR_{\phi(h)^{-1}}$ to both sides of the equation we obtain
\[X_0(h) + \mathrm{Ad}_{\phi(h)}u_0 = X'_0(h) + u_0,\]
therefore proving that $X_0 -X'_0$ is a coboundary concluding the proof.
\end{proof}

\begin{remark}\label{rmk: iso-lambda}
It would be pleasing to have a statement saying that $[X_\lambda] \in H^1_{\phi_\l}(H, \gg)$ only depends on the equivalence class of the deformation $\phi_\e$ for all $\l \in I$. The problem with such a statement is that even if $\phi_\e$ and $\phi'_\e$ are equivalent deformations of a Lie group homomorphism $\phi: H \to G$, the cohomology classes $[X_\lambda] \in H^1_{\phi_\l}(H, \gg)$ and $[X'_\lambda] \in H^1_{\phi'_\l}(H, \gg)$ live in different cohomology groups and we would have to identify both groups. One way around this difficulty is to note that $G$ acts on the space of Lie group homomorphisms $\Hom(H,G)$ by sending 
\[\psi\in\Hom(H,G) \longmapsto g \cdot \phi = I_g \circ \phi \in \Hom(H,G).\]
This action induces an isomorphism 
\[g_*: H_{\phi}^k(H,\gg) \longrightarrow  H_{g\cdot \phi}^k(H,\gg), \quad g_*[c] = [\mathrm{Ad}_g\circ c].\] 
Moreover, if $\phi_\e$ is a deformation of $\phi$, then $g\cdot\phi_\e$ is a deformation $g\cdot\phi$, and $g_*:H_{\phi}^1(H,\gg) \to  H_{g\cdot \phi}^1(H,\gg)$ maps the deformation class of $\phi_\e$ at time $0$ to the deformation class of $g\cdot \phi_\e$ at time $0$.

If $\phi_\e$ and $\phi'_\e$ are equivalent deformations of $\phi$, then by definition there exists a smooth curve $g_\e: I \to G$ such that $\phi_\e = g_\e\cdot \phi'_\e$ for all $\e \in I$. Thus, we may view $\phi_{\l+\e}$ and $g_\l\cdot \phi'_{\l+\e}$ as equivalent deformations of $\phi_\l$. It then follows that $(g_\lambda)_*[X'_\l] =[X_\l]$.
\end{remark}

In view of the proposition and remark above, we think of the cohomology classes $[X_\lambda] \in H^1_{\phi_\l}(H, \gg)$ as the \emph{velocity vector} of the equivalence class of the deformation $\phi_\e$ at time $\l$.

\begin{definition}
A deformation $\phi_\e: H \to G$ of $\phi$ is \textbf{(locally) trivial} if it is (locally) equivalent to the constant deformation $\phi'_\e \equiv \phi$ for all $\e \in I$.
\end{definition}

We wish to characterize the deformations of $\phi: H \to G$ which are trivial. Thinking of the cohomology class  $[X_\lambda]$ of a deformation $\phi_\e$ as its velocity vector, it is natural to suspect that deformations for which $[X_\lambda] = 0$ for all $\lambda$ are trivial. In order to take into acount also the smoothness of the deformation with respect to $\e$, we pose the following definition.

\begin{definition}
Let $\phi_\e: H \to G$ be a deformation of $\phi$ and for each $\e \in I$, let $c_\e \in C^1_{\phi_\e}(H, \gg)$ be cocycles. We say that the cohomology classes $[c_\e]$ \textbf{vanish smoothly} if there exists a smooth curve $u_\e \in \gg$ such that $\delta_{\phi_\e}(u_\e) = c_\e$ for all $\e \in I$. In this case we also say that $u_\e$ is a \textbf{smooth transgression} of $c_\e$.
\end{definition}

\begin{theorem}\label{thm:homo:trivial}
A deformation $\phi_\e: H \to G$ of a Lie group homomorphism $\phi: H \to G$ is locally trivial if and only if $[X_\e] \in H^1_{\phi_\e}(H,\gg)$ vanishes smoothly for small values of $\e \in I$.
\end{theorem}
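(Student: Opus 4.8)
The plan is to run the Moser deformation argument in this group-theoretic setting, mirroring the symplectic case explained in the introduction. The $(\Leftarrow)$ direction is the substantive one: starting from a smooth transgression $u_\e \in \gg$ with $\delta_{\phi_\e}(u_\e) = X_\e$, I want to produce a smooth curve $g_\e$ in $G$ with $g_0 = 1$ and $\phi_\e = I_{g_\e} \circ \phi$ for small $\e$. The natural guess is to let $g_\e$ solve the time-dependent ODE $\frac{d}{d\e} g_\e = d_1 R_{g_\e}(\,\cdot\,)$ driven by (a sign-corrected version of) $u_\e$, i.e.\ $g_\e$ is the flow of the right-invariant time-dependent vector field on $G$ determined by $-u_\e$ (or $u_\e$; the exact sign falls out of the computation in Proposition~\ref{def-homo}). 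Set $\psi_\e := I_{g_\e} \circ \phi$; this is a deformation of $\phi$, so by Proposition~\ref{def-homo} it has its own deformation cocycle, call it $Y_\e \in C^1_{\psi_\e}(H,\gg)$. The heart of the argument is the identity $Y_\e = \delta_{\psi_\e}(\text{something built from } u_\e)$, obtained by differentiating $\psi_\e(h) = g_\e \phi(h) g_\e^{-1}$ in $\e$ and right-translating to the identity — this is exactly the computation already carried out in the equivalence part of the proof of Proposition~\ref{def-homo}, now run with a curve $g_\e$ whose derivative is prescribed rather than arbitrary. Comparing, one finds that $\phi_\e$ and $\psi_\e$ have the same deformation cocycle relative to the same representation at each time (after identifying via the $g_\e$-twist as in Remark~\ref{rmk: iso-lambda}), which forces $\phi_\e = \psi_\e$ by a uniqueness-of-solutions argument: both satisfy the same first-order ODE in $\e$ with the same initial condition $\phi_0 = \phi$.

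In more detail, here are the steps in order. First, fix the sign: recompute $\frac{d}{d\e}\big|_{\e=\l}\big(g_\e \phi(h) g_\e^{-1}\big)$, right-translate by $\phi(h)^{-1}$, and read off that the resulting cocycle equals $\mathrm{Ad}_{\phi(h)} u_\l - u_\l = \delta_{\phi}(u_\l)(h)$ when $g_\l = 1$; the general-$\l$ version produces $\delta_{\psi_\l}(v_\l)$ for an appropriate $v_\l$ depending on $u_\l$ and $g_\l$. Second, define $g_\e$ as the maximal solution of the ODE above with $g_0 = 1$ — here I invoke the theory of flows of time-dependent vector fields, as the paper does for Moser, and note that this is where the \emph{local} nature of the conclusion enters: $g_\e$ is only guaranteed on some $[0,\e')$, exactly as the symplectic argument is valid only while the Moser flow exists. (Globally one would need the completeness hypothesis alluded to after Theorem~\ref{thm:homo:trivial}.) Third, set $\psi_\e = I_{g_\e}\circ\phi$ and show $\frac{d}{d\e}\psi_\e$ and $\frac{d}{d\e}\phi_\e$ agree, rephrased as: both curves $\e \mapsto \phi_\e(h)$ and $\e \mapsto \psi_\e(h)$ satisfy the same time-dependent ODE on $G$ for each fixed $h$, with the same value at $\e = 0$. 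Conclude $\phi_\e = \psi_\e = I_{g_\e}\circ\phi$, i.e.\ the deformation is locally trivial.

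For the $(\Rightarrow)$ direction, suppose $\phi_\e = I_{g_\e}\circ\phi$ for small $\e$ with $g_0 = 1$. Set $u_\e := d_{g_\e}R_{g_\e^{-1}}\frac{d}{d\e}g_\e \in \gg$, a smooth curve. Differentiating $\phi_\e(h) = g_\e \phi(h) g_\e^{-1}$ in $\e$ at time $\l$ and right-translating by $\phi_\l(h)^{-1}$ — the same computation as in Proposition~\ref{def-homo}, equivalence part — yields precisely $X_\l(h) = \delta_{\phi_\l}(u_\l)(h)$ (again up to the fixed sign), so $u_\e$ is a smooth transgression of $X_\e$.

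The main obstacle I anticipate is purely bookkeeping rather than conceptual: getting the left/right-translation conventions and the sign in the ODE for $g_\e$ exactly consistent with the normalization $X_\lambda(h) = d R_{\phi_\lambda(h)^{-1}} \frac{d}{d\e} \phi_\e(h)$ used in Proposition~\ref{def-homo}, and making the ``same ODE, same initial condition, hence equal'' step fully rigorous for a time-dependent vector field on the (possibly non-compact) manifold $G$, so that the restriction to small $\e$ is clearly the only price paid. The cohomological input ($\delta_{\phi_\e}^2 = 0$, the explicit form of $\delta_{\phi_\e}$ on $0$-cochains, namely $\delta_{\phi_\e}(u)(h) = \mathrm{Ad}_{\phi_\e(h)}u - u$) is all elementary and already available from Section~\ref{gp cohomology}.
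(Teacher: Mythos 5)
Your proposal is correct and follows essentially the same route as the paper: the forward direction is the same differentiation of $\phi_\e = I_{g_\e}\circ\phi$ yielding $X_\l = \delta_{\phi_\l}(u_\l)$ with $u_\l = \pm dR_{g_\l^{-1}}\frac{d}{d\e}\big|_{\e=\l}g_\e$, and for the converse the paper likewise takes $g_\e$ to be the flow of the right-invariant time-dependent vector field determined by $u_\e$ and shows that $\e\mapsto\phi_\e(h)$ and $\e\mapsto I_{g_\e}\circ\phi(h)$ are integral curves of the same time-dependent vector field $Z_\e(g)=dR_g(u_\e)-dL_g(u_\e)$ through $\phi(h)$, concluding by uniqueness for small $\e$. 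The only point to make fully explicit is that, because $g_\e$ is precisely the flow of the right-invariant field of $u_\e$, the transgression term for $I_{g_\e}\circ\phi$ is again $u_\e$ itself (not some other $v_\e$), which is what makes the two ODEs literally coincide.
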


\begin{proof}
Assume first that $\phi_\e$ is (locally) equivalent to the constant deformation, and let $g_\e$ be a curve in $G$ starting at the identity and such that $\phi_\e = I_{g_\e} \circ \phi$ for all $\e$. Differentiating both sides of this equation we obtain 
\begin{align*}
\left.\frac{d}{d\e}\right|_{\e=\l}\phi_\e(h)& =
\left.\frac{d}{d\e}\right|_{\e=\l}I_{g_\e}(\phi(h))\\
& = dR_{\phi(h)g_\l^{-1}}(\left.\frac{d}{d\e}\right|_{\e=\l}g_\e) +dL_{g_\l\phi(h)}\left[di\left(\left.\frac{d}{d\e}\right|_{\e=\l}g_\e\right)\right].\\
\end{align*}
Applying $dR_{\phi_\l(h)^{-1}} = dR_{g_\l\phi(h)^{-1}g_\l^{-1}}$ to both sides of the equation we obtain
\begin{align*}
X_\l(h)& = dR_{\phi_\l(h)^{-1}}\left(dR_{\phi(h)g_\l^{-1}}(\left.\frac{d}{d\e}\right|_{\e=\l}g_\e) +dL_{g_\l\phi(h)}\left[di\left(\left.\frac{d}{d\e}\right|_{\e=\l}g_\e\right)\right]\right)\\
& = dR_{g_\l^{-1}}(\left.\frac{d}{d\e}\right|_{\e=\l}g_\e) + \mathrm{Ad}_{\phi_\l(h)}\left(dL_{g_\l}\circ di(\left.\frac{d}{d\e}\right|_{\e=\l}g_\e)\right)\\
&= dR_{g_\l^{-1}}(\left.\frac{d}{d\e}\right|_{\e=\l}g_\e) + \mathrm{Ad}_{\phi_\l(h)}\left(di \circ dR_{g_\l^{-1}}(\left.\frac{d}{d\e}\right|_{\e=\l}g_\e)\right)\\
& = dR_{g_\l^{-1}}(\left.\frac{d}{d\e}\right|_{\e=\l}g_\e) - \mathrm{Ad}_{\phi_\l(h)}\left( dR_{g_\l^{-1}}(\left.\frac{d}{d\e}\right|_{\e=\l}g_\e)\right).
\end{align*}

Thus, by taking
\begin{equation}\label{Torigidity}
u_{\l}:= - \left.\frac{d}{d\e}\right|_{\e=\l}(g(\e)g(\l)^{-1}) = - dR_{g_\l^{-1}}(\left.\frac{d}{d\e}\right|_{\e=\l}g_\e) \in\ \Lieg,
\end{equation}
we obtain that
$$X_\l=\delta_{\phi_\l}(u_\l).$$
That is, the family of cocycles $X_\l$ is transgressed by the smooth family $u_\l$.

Conversely, assume that
\begin{equation}\label{*1}
X_{\l}=-\delta_{\phi_\l}(u_{\l}),
\end{equation}
where $u_\l$ is a smooth curve in $\Lieg=C^{0}_{\phi_\l}(H,\Lieg)$. Consider the time-dependent vector field $\overrightarrow{u}_{\e}$ on $G$, where each $\overrightarrow{u}_{\l}$ is the right invariant vector field which takes value $u_{\l}$ at the identity $1_G$ of $G$. Let $\epsilon'>0$ be such that the flow $\Phi^{\e,0}$ of $\overrightarrow{u}_{\e}$ is defined at $1_G\in G$ for all $\e \in (-\e',\e').$ Take $g_\e$ to be the integral curve from time $0$ to $\e$ of the time-dependent vector field $\overrightarrow{u}_\e$ starting at $1_G$, i.e., $g_\e:=\Phi^{\e,0}(1_G),\ (\left|\e\right|<\e').$ We will show now that $\phi_{\e}(h)=I_{g_{\e}}(\phi(h))$ for all small values of $\e$.

Consider the vector field on $G$ given by $Z_{\l}(g):=dR_{g}(u_{\l})-dL_{g}(u_{\l})$ and the vector field along $\phi_{\l}$, $\overrightarrow{X}_{\l}(h):=dR_{\phi_{\l}(h)}(X_{\l}(h)).$ Then, by $(\ref{*1})$, $\overrightarrow{X}_{\l}$ coincides with the pull-back of $Z_{\l}$ by $\phi_\l$. 

On the other hand, the deformation $I_{g_{\e}}\circ \phi$ of $\phi$ has associated the family of 1-cocycles
\begin{equation}\label{**1}
Y_{\e}:=-\delta_{I_{g_{\e}}\circ\phi}(u_{\e}).
\end{equation}

Equation $(\ref{**1})$ implies that the vector field $\overrightarrow{Y}_{\l} = dR_{I_{g_\l}\circ \phi(h)}(Y_{\l}(h))$ along $I_{g_\l}\circ \phi$ also coincides with the pull-back of $Z_{\l}$ by $I_{g_\l}\circ \phi$, for all $\l$. In other words, we have that $\e\mapsto\phi_{\e}(h)$ and $\e\mapsto I_{g_{\e}}\circ\phi(h)$ are integral curves of the time-dependent vector field $Z_{\e}$ passing through $\phi(h)\in G$ at time $\e=0$. Therefore, $\phi_{\e}(h)=I_{g_{\e}}(\i(h))$ for $\e$ small enough, as we claimed.
\end{proof}

The theorem above gives a characterization of the deformations of a homomorphism $\phi: H \to G$ which are (locally) trivial in terms of its deformation cocycle. We next show that if $H$ is compact, then any deformation of $\phi$ is trivial. We state this property by saying that $\phi$ is \textbf{stable}.

In order to prove our stability result, we will need to integrate a function with respect to a normalized left invariant Haar measure on $H$, i.e., a measure such that
\begin{itemize}
\item $\int_H f(h'h)dh = \int_Hf(h)dh$ for all $f \in \mathrm{C}^{\infty}(H)$ and $h' \in H$;
\item $\int_H  dh = 1$.
\end{itemize}
Any compact Lie group admits such a measure.
 
\begin{theorem}\label{RigidityMorphisms}
Let $H$ be a compact Lie group. Then any Lie group homomorphism  $\phi:H\to G$ is stable.
\end{theorem}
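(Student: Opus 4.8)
The plan is to apply Theorem \ref{thm:homo:trivial}: it suffices to show that for an arbitrary deformation $\phi_\e$ of $\phi$, with deformation cocycles $X_\e \in C^1_{\phi_\e}(H,\gg)$, the cohomology classes $[X_\e]$ vanish smoothly, i.e., there is a smooth curve $u_\e \in \gg$ with $\delta_{\phi_\e}(u_\e) = X_\e$ (or $-X_\e$, matching the sign convention in the proof). The compactness of $H$ enters through the normalized left-invariant Haar measure $dh$ introduced just above the statement: we will produce $u_\e$ by an explicit averaging formula applied to $X_\e$.

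First I would write down the cocycle condition for $X_\e$. Since $X_\e$ is a $1$-cocycle in the complex computing $H^*_{\phi_\e}(H,\gg)$, we have
\[
\delta_{\phi_\e}X_\e(h_1,h_2) = \mathrm{Ad}_{\phi_\e(h_1)}X_\e(h_2) - X_\e(h_1h_2) + X_\e(h_1) = 0,
\]
that is, $X_\e(h_1h_2) = X_\e(h_1) + \mathrm{Ad}_{\phi_\e(h_1)}X_\e(h_2)$. Now set
\[
u_\e := -\int_H X_\e(h)\,dh \in \gg,
\]
which is smooth in $\e$ because $X_\e$ depends smoothly on $\e$ and integration against a fixed measure is continuous/smooth in parameters (standard differentiation-under-the-integral argument on the compact manifold $H$). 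I would then compute $\delta_{\phi_\e}(u_\e)(h) = \mathrm{Ad}_{\phi_\e(h)}u_\e - u_\e$ and, substituting the definition and using the cocycle identity inside the integral, get
\[
\delta_{\phi_\e}(u_\e)(h) = -\int_H \mathrm{Ad}_{\phi_\e(h)}X_\e(h')\,dh' + \int_H X_\e(h')\,dh' = -\int_H\bigl(X_\e(hh') - X_\e(h)\bigr)\,dh' + \int_H X_\e(h')\,dh'.
\]
By left-invariance of $dh$ the first integral equals $\int_H X_\e(h')\,dh' - X_\e(h)\cdot 1$ (using $\int_H dh = 1$), and after cancellation this collapses to $-X_\e(h)$. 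Hence $\delta_{\phi_\e}(u_\e) = -X_\e$, exactly the hypothesis \eqref{*1} needed in Theorem \ref{thm:homo:trivial} (with the sign as in that proof), so $\phi_\e$ is locally trivial; since this holds for every deformation, $\phi$ is stable.

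I do not expect a serious obstacle here — this is the classical "averaging kills cohomology of compact groups" argument, and the only points requiring a little care are (i) justifying smoothness of $\e \mapsto u_\e$, which follows from smoothness of $(\e,h)\mapsto X_\e(h)$ and compactness of $H$, and (ii) keeping the sign conventions consistent with the statement of Theorem \ref{thm:homo:trivial} (the theorem is phrased with $[X_\e]$, while its proof uses $X_\l = -\delta_{\phi_\l}(u_\l)$; either is fine up to replacing $u_\e$ by $-u_\e$). One should also note that Theorem \ref{thm:homo:trivial} only gives \emph{local} triviality, so strictly speaking "stable" is being used in the sense that every deformation is locally trivial near each time; if a global statement is wanted one would patch local trivializations along the compact parameter interval, but the explicit transgression $u_\e$ is in fact defined for all $\e \in I$, so the flow argument in Theorem \ref{thm:homo:trivial} applies uniformly.
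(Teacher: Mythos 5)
Your proposal is correct and is essentially the paper's own proof: you average the deformation cocycle against the normalized left-invariant Haar measure, take $u_\e=-\int_H X_\e(h)\,dh$, use the cocycle identity plus left-invariance to see that $u_\e$ smoothly transgresses $X_\e$, and then invoke Theorem \ref{thm:homo:trivial}, noting the flow exists for all $\e$. The only slip is the final sign --- your displayed computation actually collapses to $\delta_{\phi_\e}(u_\e)=+X_\e$ rather than $-X_\e$ --- but as you yourself observe this is immaterial, since one may replace $u_\e$ by $-u_\e$.
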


\begin{proof}Let $\phi_{\e}$ be any deformation of $\phi$. If $H$ is compact, each $H^{1}_{\phi_{\e}}(H,\Lieg)$ vanishes. A primitive of $X_{\e}$ is given by $u_{\e}=-\int_{H} X_{\e}(h)\:dh\in\Lieg$, where the integral is taken w.r.t. a normalized left invariant Haar's measure of $H$. In fact, since $X_\e$ is a 1-cocycle, $X_\e(h')=-\mathrm{Ad}_{\phi_\e(h')}X_\e(h)+X_\e(h'h)$. Thus by integrating one has 
\begin{align*}
X_\e(h')&=\int_{H}X_\e(h')dh\\
&=-\int_{H}\mathrm{Ad}_{\phi_{\e}(h')}X_\e(h)dh+\int_{H}X_\e(h'h)dh\\
&=-\mathrm{Ad}_{\phi_{\e}(h')}\int_{H}X_\e(h)dh+\int_{H}X_\e(h)dh\\
&= \mathrm{Ad}_{\phi_{\e}(h')}u_\e - u_\e \\
&=\delta_{\phi_\e}(u_\e)(h').
\end{align*}

It follows that $X_\e$ is smoothly transgressed. Moreover, since $H$ is compact the flow $\Phi^{\e,0}$ of the time dependent vector field obtained from the transgression of $X_\e$ is defined for all $\e \in I$. We can therefore apply Theorem \ref{thm:homo:trivial} to conclude that $\phi_\e$ is a trivial deformation of $\phi$.
\end{proof}


We can also apply our methods to study weak triviality of a deformation $\phi_\e$ of a Lie group homomorphism $\phi: H \to G$. 

\begin{definition}
A deformation $\phi_\e$ of a Lie group homomorphism $\phi: H \to G$ is said to be \textbf{weakly trivial} if there exists a smooth family $F_\e: G \to G$ of Lie group automorphisms such that $F_0 = Id_G$, and $\phi_\e = F_\e\circ \phi$ for all $\e \in I$. 
\end{definition}

Recall that a Lie group homomorphism $\phi: H \to G$ induces a pull-back map $\phi^*: H^k(G,\gg) \to H^k_{\phi}(H,\gg)$. The key to characterizing the weakly trivial deformations lies in understanding if the deformation cocycle of a deformation $\phi_\e$ lies in the the image of the pull-back map.

\begin{definition}
We will say that a family $[X_\e] \in H_{\phi_\e}^1(H, \gg)$ has a \textbf{smooth pre-image in $H^1(G,\gg)$} if there exist smooth families $Z_\e \in C^1_{\mathrm{cl}}(G,\gg)$ and $u_\e \in \gg$ such that
\[\phi_\e^*(Z_\e) = X_\e +\delta_{\phi_\e}(u_\e).\]
\end{definition}

\begin{theorem}\label{thm:loc-triv-homo}
Let $\phi_\e: H \to G$ be a smooth family of Lie group homomorphisms and let $X_\e$ be its deformation cocycle. Then $\phi_\e$ is locally weakly trivial if and only if $[X_\e]$ has a smooth pre-image in $H^1(G,\gg)$ for small values of $\e$.
\end{theorem}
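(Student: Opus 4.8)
The plan is to run the same Moser-type argument as in Theorem~\ref{thm:homo:trivial}, but now allowing the ambient ``moving frame'' to be an arbitrary automorphism of $G$ rather than an inner one, which is precisely what forces the extra freedom of a \emph{closed} $1$-cochain $Z_\e$ on $G$ (as opposed to the coboundary $\delta(u_\e)$ on $G$ that would correspond to an inner automorphism). First I would prove the ``only if'' direction. Assume $\phi_\e = F_\e \circ \phi$ with $F_\e \in \mathrm{Aut}(G)$ and $F_0 = \mathrm{Id}_G$. Differentiating $\phi_\e(h) = F_\e(\phi(h))$ at $\e = \l$ and right-translating back to the identity by $\phi_\l(h)^{-1}$ gives $X_\l(h) = d_{\phi_\l(h)}R_{\phi_\l(h)^{-1}}\big(\tfrac{d}{d\e}|_{\e=\l}F_\e\big)(\phi(h))$. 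The key observation is that $\tfrac{d}{d\e}|_{\e=\l}F_\e \circ F_\l^{-1}$ is a time-dependent ``infinitesimal automorphism'' of $G$, i.e.\ a derivation-type vector field whose value at $\phi_\l(h)$, transported to the identity, is controlled by a $1$-cochain on $G$. I would set $Z_\l := d R_{(\cdot)^{-1}}\big(\tfrac{d}{d\e}|_{\e=\l}F_\e \circ F_\l^{-1}\big)(\cdot) \in C^1(G,\gg)$, check by differentiating the automorphism identity $F_\e(g_1 g_2) = F_\e(g_1)F_\e(g_2)$ that $Z_\l$ is a closed $1$-cochain on $G$ (this is the group-cohomology incarnation of ``infinitesimal automorphisms of $G$ are $1$-cocycles on $G$ with values in $\gg$''), and then verify that $\phi_\l^*(Z_\l) = X_\l + \delta_{\phi_\l}(u_\l)$ for a suitable $u_\l \in \gg$ coming from the discrepancy between $F_\e$ and $F_\e \circ F_\l^{-1}$ at time $\l$ — exactly the same bookkeeping as in the proof of Theorem~\ref{thm:homo:trivial}, where the analogous term $u_\l = -dR_{g_\l^{-1}}(\tfrac{d}{d\e}|_{\e=\l}g_\e)$ appeared.

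For the converse, suppose $\phi_\e^*(Z_\e) = X_\e + \delta_{\phi_\e}(u_\e)$ with $Z_\e \in C^1_{\mathrm{cl}}(G,\gg)$ and $u_\e \in \gg$ smooth. A closed $1$-cochain $Z_\e$ on $G$ is exactly the data of a time-dependent infinitesimal automorphism: I would produce from $Z_\e$ a time-dependent vector field $W_\e$ on $G$ — concretely $W_\e(g) = d_1 R_g(Z_\e(g))$, the ``right-invariantization'' of $Z_\e$ — whose flow $F_\e$ (defined for $\e$ small, by the local hypothesis) is a family of Lie group automorphisms of $G$ with $F_0 = \mathrm{Id}_G$; the closedness of $Z_\e$ is precisely what guarantees the flow preserves multiplication. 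Incorporating the transgression term $u_\e$, I would instead flow the vector field corresponding to $Z_\e - \delta(u_\e)$-type correction, just as in Theorem~\ref{thm:homo:trivial} the relevant field was $Z_\l(g) = dR_g(u_\l) - dL_g(u_\l)$. Then both $\e \mapsto \phi_\e(h)$ and $\e \mapsto F_\e(\phi(h))$ are integral curves of one and the same time-dependent vector field on $G$ through $\phi(h)$ at $\e = 0$ — this is the content of the computation, using $\phi_\e^*(Z_\e) = X_\e + \delta_{\phi_\e}(u_\e)$ on one side and the analogue of equation~(\ref{**1}) for $F_\e \circ \phi$ on the other — so by uniqueness of integral curves $\phi_\e(h) = F_\e(\phi(h))$ for small $\e$.

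The main obstacle, and the step deserving the most care, is establishing the precise dictionary ``closed $1$-cochains on $G$ with values in $\gg$ $\leftrightarrow$ infinitesimal automorphisms of $G$,'' i.e.\ that the flow of $W_\e$ really consists of group automorphisms and that, conversely, the velocity of a path of automorphisms gives a closed $1$-cochain. This is where the cocycle condition $\delta Z_\e = 0$ does its work: one must differentiate the identity $F_\e(g_1 g_2) = F_\e(g_1) F_\e(g_2)$ and match the result with the formula for $\delta Z_\e$ evaluated at $(g_1, g_2)$, keeping track of left vs.\ right translations and of the $\mathrm{Ad}$ terms. A secondary technical point is the interplay between the pull-back $\phi_\e^*$ and these operations — one needs that $\phi_\l^*(Z_\l)$, built from the $G$-side infinitesimal automorphism, matches the vector field along $\phi_\l$ attached to $X_\l$ plus the $\delta_{\phi_\l}(u_\l)$ correction — but this is formally identical to the corresponding step in Theorem~\ref{thm:homo:trivial} and should go through verbatim once the dictionary is in place. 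As before, the whole argument is only local in $\e$ because we need the flow $F_\e$ to be defined, matching the ``locally weakly trivial'' conclusion in the statement.
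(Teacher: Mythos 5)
Your proposal is correct and stays within the paper's Moser-type framework, but your converse is organized differently from the paper's and is worth comparing. The paper flows only the right-translation of $Z_\e$ itself to obtain automorphisms $F_\e$, and then handles the transgression term separately: it applies Theorem \ref{thm:homo:trivial} to $\phi'_\e = F_\e^{-1}\circ\phi_\e$, whose deformation cocycle is transgressed by $\tilde{u}_\e$ with $u_\e = dF_\e(\tilde{u}_\e)$, so weak triviality comes out as $\phi_\e = F_\e\circ I_{g_\e}\circ\phi$. You instead absorb the coboundary into the cochain: since $\phi_\e^*\bigl(\delta(u_\e)\bigr) = \delta_{\phi_\e}(u_\e)$ and coboundaries on $G$ are closed, $\tilde{Z}_\e := Z_\e - \delta(u_\e)$ still lies in $C^1_{\mathrm{cl}}(G,\gg)$ and satisfies $\phi_\e^*(\tilde{Z}_\e) = X_\e$ exactly, so flowing $g\mapsto dR_g(\tilde{Z}_\e(g))$ produces automorphisms and the integral-curve comparison gives $\phi_\e = F_\e\circ\phi$ in one step, with no second appeal to Theorem \ref{thm:homo:trivial}. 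Both routes rest on the same dictionary you single out as the crux (closed $\gg$-valued $1$-cochains on $G$ give multiplicative vector fields whose flows preserve the multiplication, proved exactly as in the paper by comparing $F_\e(m(g_1,g_2))$ and $m(F_\e(g_1),F_\e(g_2))$ as integral curves); your version is slightly shorter and exhibits the automorphisms directly, the paper's reuses Theorem \ref{thm:homo:trivial} as a black box and never needs to modify $Z_\e$. In the ``only if'' direction you match the paper, with two small remarks: since $F_\l^{-1}(\phi_\l(h)) = \phi(h)$, your $Z_\l$ pulls back to $X_\l$ on the nose, so the ``discrepancy'' term $u_\l$ can simply be taken to be $0$; and your explicit check that $Z_\l$ is closed (by differentiating $F_\e(g_1g_2)=F_\e(g_1)F_\e(g_2)$) is genuinely needed for the statement, while the paper leaves it implicit.

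One step you should not wave at: for $F_\e$ to be an automorphism of $G$, the flow of the (non-right-invariant) field $dR_g(\tilde{Z}_\e(g))$ must be defined at \emph{every} $g\in G$ for $\e$ small, and smallness of $\e$ alone does not give this on a noncompact group. The paper devotes an explicit (if brief) argument to this point; in your setting one can argue, for instance, that the time-$\e$ domain of the flow is open, contains a neighbourhood of $1_G$, and is stable under products because $m(F_\e(g_1),F_\e(g_2))$ is an integral curve through $g_1g_2$ — closedness of $\tilde{Z}_\e$ entering once more — so it contains the subgroup these elements generate. Supplying some such argument is needed to complete your proof.
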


\begin{proof}
Assume $\phi_\e=F_\e\circ\phi$, for a smooth family $F_\e$ of automorphisms of $G$, with $F_0=Id_G$. By applying $\frac{d}{d\e}$ to both sides of the equation we obtain

\begin{align*}
dR_{\phi_\e(h)}X_\e(h)&=\overrightarrow{Z}_\e(F_\e\circ\phi(h))\\
&=dR_{F_\e\circ\phi(h)}(Z_\e(F_\e\circ\phi(h)))\\
&=dR_{\phi_\e(h)}Z_\e(\phi_\e(h))
\end{align*}
where $\overrightarrow{Z}_\e =\frac{d}{d\e}F_\e$ is a vector field on $G$, and $Z_\e: G \to \gg$ is given by
\[Z_\e(g) = dR_{g^{-1}}\overrightarrow{Z}_\e(g).\]
It follows that $X_\e=\phi_\e^{*}Z_\e$, where $\phi_\e^{*}Z_\e$ is the 1-cocycle in $C^{1}_{\phi_\e}(H,\Lieg)$ obtained by pulling back the $1$-cocycle $Z_\e$ through the cochain map $\phi_\e^*$.

Conversely, assume that
\begin{equation}\label{preimage}
X_\e=\phi_\e^{*}Z_\e+\delta_{\phi_\e}(u_\e),
\end{equation}
for $Z_\e$ and $u_\e$ smooth families of elements in $C^{1}_{cl}(G,\Lieg)$ and $\Lieg$ respectively. Consider the time-dependent vector field $\overrightarrow{Z}_\e(g)=dR_{g}(Z_\e)$ on $G$. Define $F_\e=\Phi^{\e, 0}$ as the flow from time $0$ to $\e$ of $\overrightarrow{Z}_\e$, which exists for $\e$ small enough due to the right-invariance of each vector field $\overrightarrow{Z}_\e$. In fact, if $\tilde{\gamma}(r)=(\gamma(r),r)$, $r\in (-\e,\e)$, is the integral curve of the vector field $Z:=\overrightarrow{Z}_\e+\frac{\partial}{\partial\e}$ defined on $G\times I$ such that $\tilde{\gamma}(0)=(1_G,0)$, then, for any $g\in G$, $R_{g}(\tilde{\gamma}(r))=(R_{g}(\gamma(r)),r)$ is the integral curve of $Z$ starting in $(g,0)$; in other words $R_{g}(\gamma(r))$ is the integral curve of the time-dependent vector field $\overrightarrow{Z}_\e$ passing through $g\in G$ at time $\e=0$. Therefore, the flow $F_\e(g)$ is defined for every $g\in G$. 

We claim that $F_\e$ is a family of Lie group isomorphisms. In fact, if we denote by $m:G \times G \to G$ the multiplication on $G$, then the curves $F_\e(m(g_1, g_2))$ and $m(F_\e(g_1),F_\e(g_2))$ are integral curves of the same time-dependent vector field on $G$ starting at the same point at time $\e=0$ for all $g_1, g_2 \in G$. Indeed, on the one hand we have that
\begin{align*}
\left.\frac{d}{d\e}\right|_{\e=\l}m(F_\e(g_1),F_\e(g_2))&=dm\left(dR_{F_\l(g_1)}Z_\l(F_\l(g_1)),dR_{F_\l(g_2)}Z_\l(F_\l(g_2))\right)\\
&=dR_{F_\l(g_2)}dR_{F_\l(g_1)}Z_\l(F_\l(g_1))+dL_{F_\l(g_1)}dR_{F_\l(g_2)}Z_\l(F_\l(g_2))\\
&=dR_{m(F_\l(g_1),F_\l(g_2))}(Z_\l(m(F_\l(g_1),F_\l(g_2))))\\
&=\overrightarrow{Z}_\l(m(F_\l(g_1),F_\l(g_2))),
\end{align*}
where in the third equality we have used the fact that $Z_\l$ is a cocycle.

On the other hand, by definition we have that
$$\left.\frac{d}{d\e}\right|_{\e=\l}F_\e(m(g_1,g_2))=\overrightarrow{Z}_\l(F_\l(m(g_1,g_2))).$$
Thus, since $m(F_0(g_1),F_0(g_2))=F_0(m(g_1,g_2))$ the curves are the same. 

What we will show next is that $\phi'_\e = F_\e^{-1} \circ \phi_\e$ is a trivial deformation of $\phi$. That is, we will obtain a smooth curve $g_\e$ in $G$, starting at the identity, and such that $F_\e^{-1}\circ\phi_\e = I_{g_\e}\circ \phi$ for all $\e$. Therefore, we will have shown that $\phi_\e = F_\e\circ I_{g_\e}\circ \phi$ for all small values of $\e$ concluding the proof.

On the one hand, taking $\tilde{u}_\e\in\Lieg$ to be such that $u_\e=dF_\e(\tilde{u}_\e)$, equation \eqref{preimage} becomes
\begin{equation}\label{preimage2}
\begin{split}
X_\e&=\phi_\e^{*}(Z_\e)+\delta_{\phi_\e}(dF_\e(\tilde{u}_\e))\\
&=\phi_\e^{*}(Z_\e)+dF_\e(\delta_{F_{\e}^{-1}\phi_\e}(\tilde{u}_\e)).
\end{split}
\end{equation}

On the other hand, we set $X'_\e \in C^{1}_{\phi'_\e}(H,\Lieg)$ to be the family of deformation cocycles associated to the deformation $\phi'_\e = F_\e^{-1}\phi_\e$ of $\phi$. We claim that $X'_\e = \delta_{\phi'_\e}(\tilde{u}_\e)$, i.e., $X'_\e$ is smoothly transgressed. In fact,
\begin{equation*}
\left.\frac{d}{d\e}\right|_{\e=\l}\phi'_\e(h)=\left.\frac{d}{d\e}\right|_{\e=\l}F_\e^{-1}(\phi_\l(h))+dF_\l^{-1}(\left.\frac{d}{d\e}\right|_{\e=\l}\phi_\e(h)),
\end{equation*}
from where it follows that
\begin{equation*}
dR_{\phi'_{\l}(h)}X'_\l(h)=-dF_\l^{-1}(\overrightarrow{Z}_\l(\phi_\l(h)))+dF_\l^{-1}(dR_{\phi_\l(h)}X_\l(h)).
\end{equation*}
Thus, by applying $dR_{\phi'_{\l}(h)}^{-1}$ to both sides of the equation above we obtain
\begin{align*}
X'_\l(h)&=-dF_\l^{-1}(Z_\l(\phi_\l(h)))+dF_\l^{-1}(X_\l(h))\\
&=-dF_\l^{-1}(\phi_\l^{*}Z_\l)(h)+dF_\l^{-1}(X_\l)(h)\\
&=\delta_{\phi'_\l}(\tilde{u}_\l)(h),
\end{align*}
where the last equality follows from equation \eqref{preimage2}. It then follows from Theorem \ref{thm:homo:trivial} that $\phi'_\e$ is locally trivial concluding the proof of the theorem.
%
%

\end{proof}



\section{Deformations and Stability of Lie Subgroups}\label{results on subgroups}
In this section we state and prove our results on deformations and stability of Lie subgroups. Let $\iota: H \to G$ be an embedding of $H$ into $G$. Roughly speaking, a deformation of $H$ as a Lie subgroup of $G$ is a smooth family $(H_\e, \iota_\e)$ of embedded Lie subgroups such that $(H_0,\iota_0) = (H,\iota)$. In order to make this precise, we first explain what a deformation of a Lie group $H$ is (see also \cite{CMS} and references therein for the deformation theory of Lie groups and more generally of Lie groupoids).

\begin{definition}
Let $H$ be a Lie group. We denote its multiplication map by $m: H \times H \to H$ and its inversion map by $i: H \to H$. A \textbf{deformation of $H$} is a smooth family of maps $m_\e: H \times H \to H$, and $i_\e: H \to H$ such that $m_0 = m$, $i_0 = i$, and $H_\e = (H, m_\e, i_\e)$ is a Lie group for all $\e$.
\end{definition}

\begin{remark}
In principle one may also wish to allow the identity element to vary with $\e$. However, after composing with an isotopy of $H$ one would obtain an equivalent deformation where the identity element is fixed (see \cite{CMS}). For this reason we consider the identity element to be fixed for any deformation of $H$.
\end{remark}

\begin{remark}
In \cite{CMS}, the authors allow for more general deformations where $H_\e$ may vary smoothly (as a manifold). For this purpose, they consider $H_\e$ to be the fiber of a submersion $\widetilde{H} \to I$. Since we are interested in triviality of Lie subgroups, we will consider here only the case where $\widetilde{H} = H \times I$ (as manifolds). These deformations are called strict deformations of $H$ in \cite{CMS}.
\end{remark}

We can now proceed to define deformations of Lie subgroups.

\begin{definition}
Let $\iota: H \to G$ be an embedding of Lie groups. A \textbf{deformation of the Lie subgroup $(H, \iota)$} is a pair $(H_\e, \iota_\e)$ where $H_\e = (H, m_\e, \iota_\e)$ is a deformation of $H$, and $\iota_\e: H_\e \to G$ is a smooth family of embeddings of Lie groups. Two deformations $(H_\e, \iota_\e)$ and $(H'_\e, \iota'_\e)$ of $(H, \iota)$ are \textbf{(locally) equivalent} if there exists a smooth curve $g_\e$ starting at the identity element in $G$ such that $\iota_\e(H_\e) = I_{g_\e}( \iota'_\e(H'_\e))$ for all (small values of) $\e \in I$. 
\end{definition}

\begin{remark}\label{equiv-sub}
The equivalence of two deformations $(H_\e, \iota_\e)$ and $(H'_\e, \iota'_\e)$ of $(H, \iota)$ can be re-expressed as follows. There exists a family $F_\e: H'_\e \to H_\e$ of Lie group isomorphisms and a smooth curve $g_\e$ in $G$ starting at the identity element such that $F_0 = Id_H$ and
\[\iota_\e \circ F_\e = I_{g_\e} \circ \iota'_\e\]
for all $\e \in I$. This characterization of equivalent deformations of deformations of Lie subgroups in terms of homomorphisms will be useful in the proofs of the results presented below.
\end{remark}

\begin{proposition}\label{def-subgroup}
Let $(H_\e,\iota_\e)$ be a deformation of the Lie subgroup $\i:H\hookrightarrow G$. Then for each $\l$ the expression
$$\bar{X}_\l(h):=dR_{\iota_\l(h)^{-1}}\left.\frac{d}{d\e}\right|_{\e=\l}\iota_\e(h)\ \text{ mod } \hh_\l$$
defines a cocycle $\bar{X}_\l$ in the complex $C^{1}_{\iota_\l}(H_\l,\Lieg/\Lieh_\l)$, where $\hh_\l$ is the image under the differential of $\i_\l$ of the Lie algebra of $H_\l$. Moreover, $[\bar{X}_0] \in H^1_{\iota}(H,\Lieg/\Lieh)$ only depends on the (local) equivalence class of the deformation $(H_\e, \iota_\e)$.
\end{proposition}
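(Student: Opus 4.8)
The plan is to follow closely the proof of Proposition~\ref{def-homo}, the only new ingredient being that the multiplication $m_\e$ on $H$ is now allowed to vary together with the embedding $\iota_\e$. The guiding observation throughout is that every term produced by the variation of $m_\e$ is tangent to the deformed subgroup $\iota_\lambda(H_\lambda)$, and therefore disappears once one reduces modulo $\hh_\lambda$. I abbreviate $\dot\iota_\lambda(h):=\left.\tfrac{d}{d\e}\right|_{\e=\lambda}\iota_\e(h)$, and similarly $\dot m_\lambda$, $\dot F_0$, $\dot g_0$.

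\textbf{Step 1 (cocycle property).} I would differentiate the homomorphism identity $\iota_\e(m_\e(h_1,h_2)) = m_G(\iota_\e(h_1),\iota_\e(h_2))$ with respect to $\e$ at $\e=\lambda$. The right-hand side gives $dm_G\bigl(\dot\iota_\lambda(h_1),\dot\iota_\lambda(h_2)\bigr)$, exactly as in Proposition~\ref{def-homo}; on the left the chain rule produces $\dot\iota_\lambda\bigl(m_\lambda(h_1,h_2)\bigr)$ \emph{plus} the extra term $d_{m_\lambda(h_1,h_2)}\iota_\lambda\bigl(\dot m_\lambda(h_1,h_2)\bigr)$, which lies in the image of $d\iota_\lambda$. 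Right-translating the resulting identity by $dR_{\iota_\lambda(m_\lambda(h_1,h_2))^{-1}}$ — legitimate because $\iota_\lambda$ is a homomorphism, so $\iota_\lambda(m_\lambda(h_1,h_2)) = \iota_\lambda(h_1)\iota_\lambda(h_2)$ and $dR_{\iota_\lambda(h)^{-1}}\circ d\iota_\lambda = d\iota_\lambda\circ dR^{H_\lambda}_{h^{-1}}$ — carries this extra term into $\hh_\lambda$. Reducing modulo $\hh_\lambda$, the remaining computation is formally identical to the one in Proposition~\ref{def-homo} and yields $\bar X_\lambda(m_\lambda(h_1,h_2)) = \bar X_\lambda(h_1) + \mathrm{Ad}_{\iota_\lambda(h_1)}\bar X_\lambda(h_2)$, i.e., the $1$-cocycle equation $\delta_{\iota_\lambda}\bar X_\lambda = 0$ in $C^{1}_{\iota_\lambda}(H_\lambda,\Lieg/\hh_\lambda)$; here $\mathrm{Ad}_{\iota_\lambda(h_1)}$ descends to $\Lieg/\hh_\lambda$ precisely because $\iota_\lambda(H_\lambda)$ is a subgroup of $G$.

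\textbf{Step 2 (dependence only on the equivalence class).} I would use the reformulation of equivalence from Remark~\ref{equiv-sub}: for (locally) equivalent deformations $(H_\e,\iota_\e)$ and $(H'_\e,\iota'_\e)$ there are isomorphisms $F_\e\colon H'_\e\to H_\e$ with $F_0 = \mathrm{Id}_H$ and a curve $g_\e$ in $G$ with $g_0 = 1_G$ such that $\iota_\e\circ F_\e = I_{g_\e}\circ\iota'_\e$, which rewrites as $m_G\bigl(\iota_\e(F_\e(h)),g_\e\bigr) = m_G\bigl(g_\e,\iota'_\e(h)\bigr)$. Differentiating at $\e=0$ (using $g_0 = 1_G$ and $F_0 = \mathrm{Id}$), the left side contributes $\dot\iota_0(h) + d_h\iota\bigl(\dot F_0(h)\bigr) + dL_{\iota(h)}u_0$ and the right side $dR_{\iota(h)}u_0 + \dot\iota'_0(h)$, with $u_0 := \dot g_0\in\Lieg$. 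Applying $dR_{\iota(h)^{-1}}$, and using $dR_{\iota(h)^{-1}}d_h\iota\bigl(\dot F_0(h)\bigr) = d\iota\bigl(dR^{H}_{h^{-1}}\dot F_0(h)\bigr)\in\hh = \hh_0$, the identity becomes, modulo $\hh$, $\bar X_0(h) + \mathrm{Ad}_{\iota(h)}\bar u_0 = \bar u_0 + \bar X'_0(h)$. Thus $\bar X_0 - \bar X'_0 = -\delta(\bar u_0)$ is a coboundary in $C^{*}(H,\Lieg/\hh)$, and hence $[\bar X_0] = [\bar X'_0]$ in $H^1_\iota(H,\Lieg/\hh)$.

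\textbf{Main obstacle.} There is no deep difficulty; the one point needing genuine care is the bookkeeping in Steps~1 and~2 — namely verifying that the term coming from $\dot m_\lambda$ and the term coming from $\dot F_0$ are tangent to the relevant (deformed) subgroup, hence vanish in the quotients $\Lieg/\hh_\lambda$, respectively $\Lieg/\hh$. Once this is observed, both assertions reduce cleanly to the homomorphism case already established in Proposition~\ref{def-homo}.
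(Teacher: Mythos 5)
Your proposal is correct and follows essentially the same route as the paper: differentiate the homomorphism identity, right-translate, and observe that the terms produced by $\dot m_\lambda$ (respectively $\dot F_0$ in the equivalence step, via Remark \ref{equiv-sub}) land in $\hh_\lambda$ (respectively $\hh$) and so vanish in the quotient, reducing everything to the computation of Proposition \ref{def-homo}. The two tangency facts you single out are exactly the ones the paper's sketch highlights, so nothing is missing.
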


\begin{proof}
The proof of the proposition is practically identical the proof of Proposition \ref{def-homo}. We give a sketch of the proof here and leave the details to the reader.

Denoting by $m_\e: H \times H \to H$ the multiplication of $H_\e$ and by $m_G: G \times G \to G$ the multiplication on $G$ we have that
\[\iota_\e(m_\e(h_1, h_2)) = m(\iota_\e(h_1), \iota_\e(h_2))\]
for all $h_1, h_2 \in H$, and for all $\e \in I$. Differentiating this equation with respect to $\e$ at $\e = \l$, right translating back to the identity and projecting the result onto $\gg/\hh_\l$ furnishes the cocycle equation for $\bar{X}_\l$. In this verification one must use the fact that
\[dR_{\iota_\l(h)^{-1}\iota_\l(g)^{-1}}\circ d\iota_\l\left(\left.\frac{d}{d\e}\right|_{\e = \l}m_\e(h_1,h_2)\right) \in \hh_\l.\]

For the proof of the second part of the proposition, assume that $(H_\e, \iota_\e)$ and $(H'_\e, \iota'_\e)$ are equivalent deformations of $(H, \iota)$. According to Remark \ref{equiv-sub} there exists a smooth family $F_\e: H'_\e \to H_\e$ of Lie group isomorphisms and a smooth curve $g_\e$ in $G$ starting at the identity element such that $F_0 = Id_H$ and
\[\iota_\e \circ F_\e = I_{g_\e} \circ \iota'_\e\]
for all $\e \in I$. Let $Y_0(h) = \left.\frac{d}{d\e}\right|_{\e = 0}F_\e(h)$, and $u_0 = \left.\frac{d}{d\e}\right|_{\e = 0}g_\e$. Differentiating the equation above w.r.t. $\e$ at $\e = 0$ we obtain
\[d\iota(Y_0) + \left.\frac{d}{d\e}\right|_{\e = 0}\iota_\e(h) + dL_{\iota(h)}(u_0) = dR_{\iota(h)}(u_0) + \left.\frac{d}{d\e}\right|_{\e = 0}\iota'_\e(h).\]
Right translating this expression back to the identity via $dR_{\iota(h)^{-1}}$ and projecting to $\gg/\hh$ we obtain
\[\bar{X}'_0(h) - \bar{X}_0(h) = \left(\mathrm{Ad}_{\iota(h)}(u_0) - u_0\right) \text{ mod }\hh \]
where we have used the fact that $dR_{\iota(h)^{-1}}\circ d\iota(Y_0(h))$ belongs to $\hh$. It follows that $$\bar{X}'_0 - \bar{X}_0 = \bar{\delta}_{\iota}(\bar{u}_0),$$ 
which concludes the proof.
\end{proof}

As in the previous section, we will consider the problem of characterizing locally trivial deformations in terms infinitesimal data. 

\begin{definition} 
A deformation $(H_\e, \iota_\e)$ of $(H, \iota)$ is \textbf{(locally) trivial} if it is (locally) equivalent to the constant deformation $(H'_\e, \iota'_\e) \equiv (H,\iota)$.
\end{definition}

\begin{definition}
Let $(H_\e, \i_\e)$ be a deformation of $(H,\i)$ and let $\bar{X}_\e \in C^1_{\i_\e}(H, \gg/\hh_\e)$ be its family of deformation cocycles. We will say that $\bar{X}_\e$ is \textbf{smoothly transgressed} if there exists a smooth curve $u_\e \in \gg$ such that $\bar{X}_\e = \bar{\delta}_{\i_\e}(\bar{u}_\e)$, where $\bar{u}_\e = u_\e \text{ mod }\hh_\e$.
\end{definition}
%

\begin{theorem}\label{thm:trivial subgroup}
Let $(H_\e, \iota_\e)$ be a deformation of an embedded Lie subgroup $\iota: H \hookrightarrow G$. Then $(H_\e, \iota_\e)$ is locally trivial if and only if $\bar{X}_\e$ can be smoothly transgressed for all small values of $\e \in I$.
\end{theorem}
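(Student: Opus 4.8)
The plan is to mimic the proof of Theorem \ref{thm:homo:trivial} using the reformulation of equivalence of deformations of Lie subgroups given in Remark \ref{equiv-sub}. That is, a deformation $(H_\e,\iota_\e)$ is locally trivial precisely when there is a smooth family $F_\e: H_\e \to H$ of Lie group isomorphisms with $F_0 = \mathrm{Id}_H$ and a smooth curve $g_\e$ in $G$ starting at the identity such that $\iota_\e \circ F_\e = I_{g_\e}\circ \iota$ for all small $\e$. The key point that makes this more subtle than the homomorphism case is that now one must \emph{also} produce the isomorphisms $F_\e$ flattening the varying group structures $m_\e$ — the "horizontal" data — in addition to the curve $g_\e$ in $G$.

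For the forward implication, I would assume $\iota_\e\circ F_\e = I_{g_\e}\circ\iota$, differentiate at $\e=\l$, apply $dR_{\iota_\l(h)^{-1}}$, and project modulo $\hh_\l$. The term $d\iota_\l(\tfrac{d}{d\e}|_{\e=\l}F_\e(h))$ lands in $\hh_\l$ after right-translation (since $F_\e$ maps $H$ into $H$, its velocity is tangent to $H_\l$, hence maps into $\hh_\l$ under $d\iota_\l$), so it vanishes in the quotient; exactly as in Theorem \ref{thm:homo:trivial} the remaining terms give $\bar{X}_\l = \bar{\delta}_{\iota_\l}(\bar u_\l)$ with $u_\l = -dR_{g_\l^{-1}}(\tfrac{d}{d\e}|_{\e=\l}g_\e)$. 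So the deformation cocycle is smoothly transgressed by $\bar u_\e$.

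For the converse, suppose $\bar{X}_\e = \bar\delta_{\iota_\e}(\bar u_\e) = -\bar\delta_{\iota_\e}(\bar v_\e)$ for a smooth curve $v_\e = -u_\e \in \gg$. As before, form the right-invariant time-dependent vector field $\overrightarrow{v}_\e$ on $G$ with $\overrightarrow{v}_\l(1_G)=v_\l$, let $g_\e = \Phi^{\e,0}(1_G)$ be its flow from $1_G$, and consider the time-dependent vector field $Z_\l(g) = dR_g(v_\l) - dL_g(v_\l)$ on $G$. The transgression equation says that, modulo $\hh_\l$, the vector field $\overrightarrow{X}_\l(h) = dR_{\iota_\l(h)}(X_\l(h))$ along $\iota_\l$ agrees with the pullback of $Z_\l$ by $\iota_\l$. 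The new ingredient is that the discrepancy lies in $\hh_\l$ rather than being zero, so I cannot directly conclude that $\e\mapsto \iota_\e(h)$ is an integral curve of $Z_\e$. Instead I would use the $\hh_\l$-valued discrepancy to define the infinitesimal generator of the family $F_\e$: concretely, set $Y_\l(h) \in T_h H_\l$ by $d\iota_\l(Y_\l(h)) = dR_{\iota_\l(h)}(X_\l(h)) - Z_\l(\iota_\l(h))$, which is a well-defined element of $\hh_\l$ pulled back along $\iota_\l$ (one must check this is a genuine vector field on $H$, using that $\iota_\l$ is an embedding), integrate $Y_\e$ to a flow $F_\e: H \to H$ (defined for small $\e$), and verify $F_\e$ is a Lie group isomorphism $H_\e \to H_0$ by the same "two integral curves through the same point" argument used in Theorem \ref{thm:loc-triv-homo} for the multiplication maps — here one differentiates $\iota_\e(m_\e(h_1,h_2)) = m_G(\iota_\e(h_1),\iota_\e(h_2))$ and uses the cocycle property. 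Finally, comparing $\iota_\e\circ F_\e$ and $I_{g_\e}\circ\iota$: both are curves in $G$ through $\iota(h)$ at $\e=0$, and by construction both have $\e$-derivative given by $Z_\e$ evaluated along them (the $F_\e$ was designed precisely to absorb the $\hh_\l$ discrepancy), so they coincide for small $\e$, giving local triviality.

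The main obstacle is the converse direction: unlike the homomorphism case where the transgression equation is an honest equality of vector fields along $\phi_\l$, here it only holds modulo $\hh_\l$, and one has to correctly interpret the quotient-valued cocycle $\bar{X}_\l$ to extract both a curve $g_\e$ in $G$ \emph{and} a compensating family of group isomorphisms $F_\e$ of $H$. Making precise that the $\hh_\l$-valued correction term $dR_{\iota_\l(h)}(X_\l(h)) - Z_\l(\iota_\l(h))$ is the pullback under $\iota_\l$ of a smooth time-dependent vector field on $H$ whose flow consists of Lie group isomorphisms — and checking smoothness in $\e$ throughout — is where the real work lies; the rest is a careful bookkeeping of right/left translations paralleling the earlier proofs.
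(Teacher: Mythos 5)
Your proposal is correct and follows essentially the same route as the paper: the forward direction is identical, and in the converse you build $F_\e$ as the flow of the $\hh_\l$-valued discrepancy, verify it is an isomorphism onto $H_\e$ by the two-integral-curves argument fed by differentiating $\iota_\e\circ m_\e = m_G\circ(\iota_\e\times\iota_\e)$, and finish with a Moser comparison in $G$ which the paper merely packages as an application of Theorem \ref{thm:homo:trivial} to $\iota'_\e=\iota_\e\circ F_\e$. The only slips are cosmetic: the generator should be defined by $d\iota_\l(Y_\l(h)) = Z_\l(\iota_\l(h)) - dR_{\iota_\l(h)}(X_\l(h))$ (with your sign, $\iota_\e\circ F_\e$ would follow $2\,dR_{\iota_\e(F_\e(h))}(X_\e(F_\e(h)))-Z_\e$ rather than $Z_\e$), and $F_\e$ is an isomorphism $H=H_0\to H_\e$ rather than $H_\e\to H_0$.
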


\begin{proof}
The triviality of $(H_{\e}, \iota_\e)$ amounts to saying that
$$\i_\e\circ F_\e=I_{g_\e}\circ\i,$$ for a smooth family of isomorphisms $F_\e:H\rightarrow H_\e$.
Denote by $\overrightarrow{Y}_\e$ the time-dependent vector field on the manifold $H$ induced by the diffeomorphisms $F_\e$ and let $u_\l = \left.\frac{d}{d\e}\right|_{\e = \l}g_\e g_\l^{-1}$. Differentiating the equation above with respect to $\e$ at $\e = \l$ and right translating back to the identity we obtain
\[dR_{\i_\l(F_\l(h))^{-1}}\left(d\i_\l(\overrightarrow{Y}_\l(F_\l(h))) + \left.\frac{d}{d\e}\right|_{\e = \l}\i_\e(F_\l(h)) \right) = u_\l - \mathrm{Ad}_{\i_\l(F_\l(h))}(u_\l).\]
Projecting to $\gg/\hh_\l$ and using the fact that $dR_{\i_\l(F_\l(h))^{-1}}\left(d\i_\l(\overrightarrow{Y}_\l(F_\l(h)))\right) \in \hh_\l$ we obtain
\[\bar{X}_\l (F_\l(h)) = - \bar{\delta}_{\i_l}(\bar{u}_\l)(F_\l(h))\]
for all $h \in H$, and $\l \in I$. Since $F_\l$ is surjective, we conclude that
\[\bar{X}_\l =  \bar{\delta}_{\i_\l}(-\bar{u}_\l)\]
which proves that $\bar{X}_\e$ can be smoothly transgressed.


We will now prove the converse statement. Assume that $\bar{X}_\e$ can be smoothly transgressed. By definition, there exists a smooth curve $u_\e$ in $\gg$ such that
\[\bar{X}_\e(h) = \bar{\delta}_{\i_\e}(\bar{u}_\e)(h) = \left(\mathrm{Ad}_{\i_\e(h)}(u_\e) - u_\e\right)\text{ mod }\hh_\e\]
for all $h \in H$ and $\e \in I$.

Let $X_\e(h) = dR_{\i_\e(h)^{-1}}\left(\frac{d}{d\e}\i_\e(h)\right) \in \gg$. Since $X_\e(h)$ and  $\mathrm{Ad}_{\i_\e(h)}(u_\e) - u_\e$ project to the same element in $\gg/\hh_\e$, it follows that
\begin{equation}\label{deform.cocycle}
X_\e(h) - \mathrm{Ad}_{\i_\e(h)}(u_\e) + u_\e = -d_{\i_\e}(Y_\e(h)),
\end{equation}
where $Y_\e: H \to \hh$ is a smooth map.

Let $\overrightarrow{Y}_\e$ be the time dependent vector field on $H$ given by 
\[\overrightarrow{Y}_\e(h) = dR^\e_h(Y_\e(h)),\] 
where $R^\e_h(h') = m_\e(h', h)$ denotes right translation by $h$ in $H_\e$, and let $F_\e$ denote its flow from time $0$ to $\e$, which exists for $\e$ small enough due to the right-invariance of each vector field $\overrightarrow{Y}_\e$ (for the existence of the flow, see the analogous statement in the proof of Theorem \ref{thm:loc-triv-homo}).

We claim that for each $\e$, $F_\e: H \to H_\e$ is a Lie group isomorphism. In order to show this we must verify that
\[F_\e(h_1\cdot h_2) = m_\e(F_\e(h_1), F_\e(h_2))\]
for all $h_1, h_2 \in H$, and all $\e$, where $H_\e = (H, m_\e, i_\e)$.
It is clear that the equation holds when $\e = 0$, so we will prove that both sides are integral curves of the same time-dependent vector field defined on $H$. 

On the one hand, we have that
\begin{align*}
\left.\frac{d}{d\e}\right|_{\e=\l}m_\e(F_\e(h_1),F_\e(h_2))&=\left.\frac{d}{d\e}\right|_{\e=\l}m_\e(F_\l(h_1),F_\l(h_2))+ \\
&\ \ \ \ \ \ \ \ \ \ + dm_\l\left(dR^{\l}_{F_\l(h_1)}Y_\l(F_\l(h_1)),dR^{\l}_{F_\l(h_2)}Y_\l(F_\l(h_2))\right)\\
&=\left.\frac{d}{d\e}\right|_{\e=\l}m_\e(F_\l(h_1),F_\l(h_2))+dR^{\l}_{F_\l(h_2)}dR^{\l}_{F_\l(h_1)}Y_\l(F_\l(h_1))+\\
&\ \ \ \ \ \ \ \ \ \ +dL^{\l}_{F_\l(h_1)}dR^{\l}_{F_\l(h_2)}Y_\l(F_\l(h_2))\\
&=dR^{\l}_{m_\l(F_\l(h_1),F_\l(h_2))}\left(dR^{\l}_{m_\l(F_\l(h_1),F_\l(h_2))^{-1}}\left.\frac{d}{d\e}\right|_{\e=\l}m_\e(F_\l(h_1),F_\l(h_2))\right)+ \\
&\ \ \  +dR^{\l}_{m_\l(F_\l(h_1),F_\l(h_2))}\big(Y_\l(m_\l(F_\l(h_1),F_\l(h_2)))+\delta_\l(Y_\l)(F_\l(h_1),F_\l(h_2))\big)\\
&=dR^{\l}_{m_\l(F_\l(h_1),F_\l(h_2))}(Y_\l(m_\l(F_\l(h_1),F_\l(h_2))))\\
&=\overrightarrow{Y}_\l(m_\l(F_\l(h_1),F_\l(h_2))),
\end{align*}
where in the fourth equality we have used the fact that
$$\delta_\l(Y_\l)(h_1,h_2)=-dR^{\l}_{m_\l(F_\l(h_1),F_\l(h_2))^{-1}}\left(\left.\frac{d}{d\e}\right|_{\e=\l}m_\e(F_\l(h_1),F_\l(h_2))\right)$$
which follows from applying $\delta_{\i_\e}$ to  Equation \eqref{deform.cocycle} and using that
$$\delta_{\i_\e}X_\e(h_1,h_2)=d\i_\e\circ dR^{\e}_{m_\e(h_1,h_2)^{-1}}(\frac{d}{d\e}m_\e(h_1,h_2))$$
(see Proposition \ref{def-subgroup}).

On the other hand, by definition we have that
$$\left.\frac{d}{d\e}\right|_{\e=\l}F_\e(m(h_1,h_2))=\overrightarrow{Y}_\l(F_\l(m(h_1,h_2))).$$
Therefore, $m_\e(F_\e(h_1),F_\e(h_2))$ and $F_\e(m(h_1,h_2))$ are integral curves of the time-dependent vector field $\overrightarrow{Y}_\l$ on $H$ and they start at the same point at time $\e=0$, hence they are equal.

Define $\i'_\e:=\i_\e\circ F_\e$. We claim that the deformation cocycles of the deformation $\i'_\e$ of $\i$ can be smoothly transgressed. Indeed, it is straightforward to check that its associated 1-cocycles $X'_{\e}$ are
\begin{align*}
X'_{\e}(h)&=X_{\e}(F_{\e}(h))+d\i_{\e}Y_{\e}(F_\e(h))\\
&=\delta_{\i_{\e}}(u_{\e})(F_{\e}(h))\\
&=\delta_{\i_{\e}\circ F_{\e}}(u_{\e})(h)\\
&=\delta_{\i'_{\e}}(u_{\e})(h).
\end{align*}
Therefore, by Theorem \ref{thm:homo:trivial}, the deformation $\i'_{\e}$ is locally trivial. In other words, we have that $\i_{\e}\circ F_{\e}=I_{g_{\e}}\circ\i$ for small values of $\e$, from where it follows that $(H_{\e}, \i_\e)$ is locally trivial.
\end{proof}

\begin{remark}
We remark that the proofs presented above continue valid word by word if instead of considering embedded subgroups one considers immersed subgroups (and deformations by immersed subgroups).  
\end{remark}


The second main result about stability of this paper states that any compact Lie subgroup of any Lie group $G$ is stable as a Lie subgroup. In order to prove this result, we will need to use a \emph{normalized left invariant Haar system} on a deformation $H_\e$ of $H$. Such a system is a family of normalized left invariant Haar measures on $H_\e$ which depend smoothly on $\e$ in the sense that if $f: H \times I \to \mathbb{R}$ is smooth, then
\[\e \in I \mapsto \int_{H_\e}f_\e(h)dh\]
is a smooth function on $I$. Any deformation of a compact Lie group admits such a Haar System (see for example \cite{Renault} or \cite{Crainic}).

\begin{theorem}\label{rigiditysubgroups}
Let $G$ be a Lie group and let $\i:  H \hookrightarrow G$ be a compact Lie subgroup. Then any deformation $(H_\e, \i_\e)$ of $(H, \i)$ is trivial.
\end{theorem}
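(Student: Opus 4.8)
The plan is to reduce Theorem~\ref{rigiditysubgroups} to Theorem~\ref{thm:trivial subgroup} by constructing an explicit smooth transgression of the deformation cocycles $\bar{X}_\e \in C^1_{\i_\e}(H_\e, \gg/\hh_\e)$ using averaging over a normalized left invariant Haar system on the deformation $H_\e$. The subtlety compared to Theorem~\ref{RigidityMorphisms} is that here the group structure on $H$ itself is varying, so the cocycle equation we integrate lives in the complex $C^*_{\i_\e}(H_\e, \gg/\hh_\e)$ whose differential $\bar{\delta}_{\i_\e}$ depends on $\e$ both through the representation and through the multiplication $m_\e$; nonetheless the averaging identity is formally the same.

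First I would recall that since $H$ (hence each $H_\e$) is compact, $H^1_{\i_\e}(H_\e, \gg/\hh_\e) = 0$ for every $\e$, so each $\bar{X}_\e$ is individually a coboundary. The point is to produce the primitives smoothly in $\e$. Fix a normalized left invariant Haar system $dh$ on $H_\e$ (which exists by the cited references, and whose integrals depend smoothly on $\e$). Define
\[
u_\e := -\int_{H_\e} X_\e(h)\, dh \in \gg, \qquad X_\e(h) = dR_{\i_\e(h)^{-1}}\Big(\tfrac{d}{d\e}\i_\e(h)\Big),
\]
and set $\bar{u}_\e = u_\e \bmod \hh_\e$. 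Because the integral of a smooth family depends smoothly on $\e$ and the projection $\gg \to \gg/\hh_\e$ varies smoothly (the $\hh_\e$ form a smooth family of subspaces since $\i_\e$ is a smooth family of embeddings), the curve $\e \mapsto \bar{u}_\e$ is smooth.

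Next I would verify that $\bar{X}_\e = \bar{\delta}_{\i_\e}(\bar{u}_\e)$. The cocycle equation for $\bar{X}_\e$ in $C^1_{\i_\e}(H_\e,\gg/\hh_\e)$ reads, for $h', h \in H$,
\[
\bar{X}_\e(m_\e(h',h)) = \mathrm{Ad}_{\i_\e(h')}\bar{X}_\e(h) + \bar{X}_\e(h') \quad \text{in } \gg/\hh_\e,
\]
equivalently $\bar{X}_\e(h') = \bar{X}_\e(m_\e(h',h)) - \mathrm{Ad}_{\i_\e(h')}\bar{X}_\e(h)$. Integrating this identity in $h$ against the normalized left invariant Haar measure on $H_\e$, and using left invariance $\int_{H_\e}\bar{X}_\e(m_\e(h',h))\,dh = \int_{H_\e}\bar{X}_\e(h)\,dh = -\bar{u}_\e$ together with linearity of $\mathrm{Ad}_{\i_\e(h')}$ and of the projection, yields
\[
\bar{X}_\e(h') = -\bar{u}_\e + \mathrm{Ad}_{\i_\e(h')}\bar{u}_\e = \bar{\delta}_{\i_\e}(\bar{u}_\e)(h'),
\]
which is exactly the statement that $\bar{X}_\e$ is smoothly transgressed. (One must be slightly careful that $\mathrm{Ad}_{\i_\e(h')}$ commutes with the projection to $\gg/\hh_\e$ — this holds because $\hh_\e$ is $\mathrm{Ad}_{\i_\e(H)}$-invariant, $\i_\e(H_\e)$ being a subgroup.) Having produced the smooth transgression, Theorem~\ref{thm:trivial subgroup} applies directly — provided the associated time-dependent vector field $\overrightarrow{Y}_\e$ on $H$ has a flow defined for all $\e \in I$, which it does by compactness of $H$ — and we conclude that $(H_\e, \i_\e)$ is trivial.

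The main obstacle I anticipate is bookkeeping rather than a deep difficulty: making sure the averaging is carried out in the correct complex (the $\e$-dependent one $C^*_{\i_\e}(H_\e,\gg/\hh_\e)$, with multiplication $m_\e$, not the fixed one) so that left invariance of the Haar system on $H_\e$ is the right invariance to invoke, and verifying that all the $\e$-dependent objects — the subspaces $\hh_\e$, the projections, the Haar integrals, the representation $\mathrm{Ad}\circ\i_\e$ — fit together into a genuinely smooth family $\bar{u}_\e$. Once that is set up, the computation is a verbatim analogue of the one in the proof of Theorem~\ref{RigidityMorphisms}, and global existence of the flow follows as there from compactness of $H$.
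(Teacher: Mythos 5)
Your proposal is correct and follows essentially the same route as the paper: define $u_\e = -\int_{H_\e} X_\e(h)\,dh$ using the smooth normalized left invariant Haar system, average the cocycle identity in the $\e$-dependent complex to get $\bar{X}_\e = \bar{\delta}_{\i_\e}(\bar{u}_\e)$, and then invoke Theorem \ref{thm:trivial subgroup}, with compactness of $H$ guaranteeing the flows exist for all $\e$ and hence global triviality. The paper merely compresses the averaging step as ``identical to the proof of Theorem \ref{RigidityMorphisms}'', which you spell out (including the $\mathrm{Ad}$-invariance of $\hh_\e$), so there is no substantive difference.
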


\begin{proof}
Let $(H_\e, \i_\e)$ be a deformation of $(H, \i)$. Consider the smooth family of functions $X_\e: H \to \gg$ given by
$$X_{\e}(h)=dR_{\i_{\e}(h)}^{-1}\left(\left.\frac{d}{d\l}\right|_{\l=\e}\i_{\l}(h)\right),$$
so that $X_\e \text{ mod } \hh_\e = \bar{X}_\e \in C_{\i_\e}^1(H,\gg/\hh_\e)$ is the family of deformation cocycles associated to the deformation $(H_\e, \i_\e)$. 

We define the $u_\e \in \gg$ by
\[u_\e = -\int_{H_\e}X_\e(h)dh.\]
Notice that since the Haar system is smooth, it follows that $u_\e$ is a smooth curve in $\gg$. By a computation identical to the one presented in the proof of Theorem \ref{RigidityMorphisms} we obtain that 
\[\bar{\delta}_{\i_\e}(\bar{u}_\e) = \bar{X}_\e,\]
where $\bar{u}_\e = u_\e \text{ mod }\hh_\e$.

It follows that $\bar{X}_\e$ can be smoothly transgressed and we can apply Theorem \ref{thm:trivial subgroup} to conclude the proof. The \emph{``global''} triviality of the deformation follows from the fact that $H$ is compact and therefore the flows of the vector fields used in the proof of Theorem \ref{thm:trivial subgroup} are defined for all $\e \in I$.
\end{proof}

\end{document}